\newtheorem{thm}{Theorem}[section]
\newtheorem{cor}[thm]{Corollary}
\newtheorem{lem}[thm]{Lemma}
\newtheorem{prop}[thm]{Proposition}
\newtheorem{conj}[thm]{Conjecture}
\newtheorem*{thm*}{Theorem}
\theoremstyle{definition}
\theoremstyle{remark}
\newtheorem{rem}[thm]{Remark}
\newtheorem*{claim}{Claim}
\newtheorem*{ack*}{Acknowledgment}
\numberwithin{equation}{section}
\numberwithin{figure}{section}
\newcommand{\per}{\mathrm{per}}             
\newcommand{\pXtoY}{\phi : X \to Y}
\newcommand{\cA}{\mathcal{A}}       
\newcommand{\cE}{\mathcal{E}}
\newcommand{\cF}{\mathcal{F}}       
\newcommand{\cP}{\mathcal{P}}
\newcommand{\cS}{\mathcal{S}}
\newcommand{\cT}{\mathcal{T}}
\newcommand{\setN}{\mathbb{N}}
\newcommand{\setR}{\mathbb{R}}
\newcommand{\setZ}{\mathbb{Z}}
\newcommand{\A}{\mathcal{A}}    
\newcommand{\B}{\mathcal{B}}    
\newcommand{\F}{\mathcal{F}}    
\newcommand{\E}{\mathcal{E}}    
\newcommand{\X}{\mathsf{X}}
\newcommand{\ifff}{if and only if}
\newcommand{\fto}{finite-to-one}
\newcommand{\ito}{infinite-to-one}
\newcommand{\bir}{bi-resolving}
\newcommand{\bic}{bi-closing}
\newcommand{\SFT}{shift of finite type}
\newcommand{\SFTs}{shifts of finite type}
\newcommand{\rSFT}{irreducible shift of finite type}
\newcommand{\rSFTs}{irreducible shifts of finite type}
\newcommand{\mSFT}{mixing shift of finite type}
\newcommand{\mSFTs}{mixing shifts of finite type}
\newcommand{\Sofic}{sofic shift}
\newcommand{\rSofic}{irreducible sofic shift}
\newcommand{\mSofic}{mixing sofic shift}
\newcommand{\onto}{\xymatrix{\ar@{>>}[r]&}}
\newcommand{\da}[4]{\xymatrix{#1 \ar@<.5ex>[r]^{#2} \ar@<-.5ex>[r]_{#3} & #4}}
\newcommand{\dom}{\mathrm{dom}}
\begin{document}

\title[Decompositions of factors and embeddings]{Decompositions of factor codes and embeddings between shift spaces with unequal entropies}

\author[S. Hong]{Soonjo Hong}
\address{Centro de Modelamiento Matem\'atico \\
    Universidad de Chile \\
    Av. Blanco Encalada 2120, Piso 7 \\
    Santiago de Chile \\
    Chile}
\email{hsoonjo@dim.uchile.cl}

\author[U. Jung]{Uijin Jung}
\address{School of Mathematics \\
	Korea Institute for Advanced Study \\
	85 Hoegiro, Dongdaemun-gu \\
    Seoul 130-722 \\
	South Korea}
\email{uijin@kias.re.kr}

\author[I. Lee]{In-Je Lee}
\address{306-1 Bolli-dong \\
    Dalseo-gu \\
    Daegu 704-913 \\
    South Korea
}
\email{ijlee@kaist.ac.kr}

\subjclass[2010]{Primary 37B10; Secondary 37B40, 54H20}
\keywords{shift of finite type, sofic shift, decomposition, entropy, factor, embedding, Perron number, weak Perron number}

\maketitle

\begin{abstract}
    Given a factor code between \Sofic s $X$ and $Y$, there is a family of decompositions of the original code into factor codes such that the entropies of the intermediate subshifts arising from the decompositions are dense in the interval from the entropy of $Y$ to that of $X$. Furthermore, if $X$ is of finite type, we can choose those intermediate subshifts as shifts of finite type.
    In the second part of the paper, given an embedding from a shift space to an irreducible sofic shift, we characterize the set of the entropies of the intermediate subshifts arising from the decompositions of the given embedding into embeddings.
\end{abstract}

\section{Introduction}

Problems concerning decompositions of one code into several codes were considered in symbolic dynamics for coding purpose. The decomposition theorem of Williams states that every conjugacy between two \SFTs\ can be decomposed into a composition of splitting codes and amalgamation codes \cite{Wil74}.
Since closing codes form an important class of finite-to-one codes, Adler and Marcus \cite{AdlM} asked whether every finite-to-one factor code between irreducible \SFTs\ can be represented as a composition of closing codes. 
This is the case in the eventual category, but not true in general \cite{KitMT91}. Previous research on decompositions had concentrated on \fto\ factor codes \cite{Boy98, Tro95, Tro98}.
In particular, Boyle proved that up to conjugacy there are only finitely many decompositions of a finite-to-one factor code between \rSFTs\ into factor codes between \rSFTs\ \cite{Boy98}.
But so far, not much is known about decompositions of codes between shift spaces with unequal entropies, even in the category of \rSFTs.

\vspace{0.15cm}
In this paper, we consider decompositions of codes between shift spaces with unequal entropies. Let $\phi : X \to Y$ be a factor code between shift spaces. We are interested in the set $\cS(\phi)$ of all the entropies of the intermediate subshifts arising from the decompositions of $\phi$, defined by
\[
    \cS(\phi) = \{ h(\phi_1(X)) : \phi = \phi_2 \circ \phi_1 \text{ with } \phi_1 \text{ and } \phi_2 \text{ factor codes}\},
\]
where $h$ denotes topological entropy. Since the class of \SFTs\ is a fundamental and tractable class of shift spaces, if $X$ is of finite type we are also interested in the set $\cS_0(\phi)$ of all the entropies of the intermediate \SFTs\ arising from the decompositions, defined by
\[
    \begin{split}
    \cS_0(\phi) &= \{ h(\phi_1(X)) : \phi = \phi_2 \circ \phi_1 \text{ with } \phi_1 \text{ and } \phi_2 \text{ factor codes} \\
    &\hspace{2.65cm} \text{ and } \phi_1(X) \text{ of finite type} \}.
    \end{split}
\]

In \cite{BoyT84}, Boyle and Tuncel showed that if $\pXtoY$ is a factor code between irreducible shifts of finite type with unequal entropies then $h(X)$, and hence each element in $\cS_0(\phi)$ other than $h(Y)$, are limit points of $\cS_0(\phi)$. Thus up to conjugacy there are infinitely many decompositions of a factor code between \rSFTs\ with unequal entropies into factor codes. In \S 3, we extend this result as follows (see Theorem \ref{thm:decompose_factor_sofic_case} and Theorem \ref{thm:decompose_factor_SFT_case}).

\begin{thm}
    Let $\phi: X \to Y$ be a factor code between sofic shifts. Then $\cS(\phi)$ is dense in the interval $[h(Y),h(X)]$. If $X$ is of finite type, then $\cS_0(\phi)$ is also dense in $[h(Y),h(X)]$.
\end{thm}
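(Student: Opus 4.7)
The plan is to reduce to the nontrivial case $h(X) > h(Y)$ --- otherwise $[h(Y), h(X)]$ collapses to a single point, which lies in both $\cS(\phi)$ and (when $X$ is of finite type) $\cS_0(\phi)$ via the trivial decompositions $\phi = \mathrm{id}_Y \circ \phi$ and $\phi = \phi \circ \mathrm{id}_X$. Fix $t \in (h(Y), h(X))$ and $\epsilon > 0$; the task is to produce a decomposition $\phi = \phi_2 \circ \phi_1$ whose middle shift has entropy in $(t - \epsilon, t + \epsilon)$, with $\phi_1(X)$ required to be of finite type for the second statement. Since the endpoints $h(X), h(Y)$ always lie in $\cS(\phi)$, it is enough to show that every $t$ in the open interval is a limit point.

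For the finite-type statement I would bootstrap the cited Boyle--Tuncel limit-point theorem by a minimal-counterexample argument. Assuming toward contradiction that $\cS_0(\phi) \cap (t - \epsilon, t + \epsilon) = \emptyset$, set
\[
    s \;=\; \inf \bigl( \cS_0(\phi) \cap [t + \epsilon,\, h(X)] \bigr),
\]
which is well-defined because $h(X) \in \cS_0(\phi)$. If $s$ is realized by some intermediate SFT in a decomposition of $\phi$, then $s \geq t + \epsilon > h(Y)$, so by Boyle--Tuncel $s$ is a limit point of $\cS_0(\phi)$. But minimality of $s$ excludes elements of $\cS_0(\phi)$ from $[t + \epsilon, s)$, while the gap hypothesis excludes them from $(t - \epsilon, t + \epsilon)$; together these rule out a whole left neighborhood of $s$, contradicting the limit-point property. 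If $s$ is not attained, a sequence $s_n \downarrow s$ of attained values in $\cS_0(\phi)$ yields the same contradiction by the same reasoning applied in left neighborhoods of each $s_n$. A small reducibility reduction may be needed first, since Boyle--Tuncel as quoted requires irreducible SFTs, but restricting to a suitable irreducible component of $X$ (or an entropy-attaining one) should handle this.

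For the general sofic statement I plan to pass through an SFT cover $\pi : \tilde X \to X$, apply the finite-type density to the factor code $\phi \circ \pi : \tilde X \to Y$ in order to obtain an intermediate SFT $\tilde Z$ with entropy in $(t - \epsilon, t + \epsilon)$ and a factorization $\phi \circ \pi = \alpha \circ \beta$, and then descend to a sofic intermediate of $\phi$ by forming the smallest closed, shift-invariant equivalence relation on $\tilde Z$ that identifies $\beta(\tilde x)$ with $\beta(\tilde x')$ whenever $\pi(\tilde x) = \pi(\tilde x')$. Taking $Z$ to be the corresponding quotient yields induced factor codes $\phi_1 : X \to Z$ and $\phi_2 : Z \to Y$ with $\phi_2 \circ \phi_1 = \phi$. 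The central obstacle is that this descent may drop the entropy from $h(\tilde Z)$ to some strictly smaller $h(Z)$, with no a priori control over the size of the drop; the hard technical step will therefore be to choose $\tilde Z$ compatibly with the fibers of $\pi$ from the start --- so that $\beta$ already respects $\pi$-equivalence and the descent is essentially entropy-preserving. A cleaner, if more demanding, alternative would be to establish a sofic analogue of Boyle--Tuncel's limit-point theorem and then run the minimal-counterexample argument directly in the sofic category, avoiding the cover altogether.
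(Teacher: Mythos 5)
There is a genuine gap, and it sits at the heart of your plan: the Boyle--Tuncel property (every element of $\cS_0(\phi)\setminus\{h(Y)\}$ is a limit point of $\cS_0(\phi)$ from the left) simply does not imply density in $[h(Y),h(X)]$, and your minimal-counterexample argument does not repair this. If the infimum $s=\inf(\cS_0(\phi)\cap[t+\epsilon,h(X)])$ is not attained, there is no contradiction: a sequence $s_n\downarrow s$ of attained values can perfectly well have its left-approximating elements lying in $(s,s_n)$, which is compatible both with the gap $(t-\epsilon,t+\epsilon)$ and with $s$ not being attained. Concretely, a set such as $\{h(Y)\}\cup\{h(X)\}\cup\bigl(\setQ\cap(a,h(X))\bigr)$ with $a>h(Y)$ satisfies the conclusion of Proposition \ref{prop:BoyT} (together with containing the endpoints) yet is not dense in $[h(Y),h(X)]$; so no purely formal bootstrap of that proposition can yield the theorem. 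In particular your argument has no mechanism for producing intermediate entropies \emph{near $h(Y)$}, which is exactly the content of the paper's Proposition \ref{prop:decompose_factor_SFT_case_simplified} (the marker/overlap construction on $X^{[n]}$ producing an SFT intermediate with entropy within $\epsilon$ of $h(Y)$); that proposition is the real work, and nothing in the proposal substitutes for it. A secondary but real problem is that Proposition \ref{prop:BoyT} requires a factor code between \emph{irreducible SFTs}, whereas here $Y$ is merely sofic (and possibly reducible); restricting to an irreducible component of $X$ changes the code, need not map onto $Y$, and decompositions of the restriction are not decompositions of $\phi$, so the "reducibility reduction" does not go through as stated.

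The sofic half of your plan is also incomplete in the direction you acknowledge: after factoring $\phi\circ\pi$ through an SFT $\tilde Z$, the quotient by the closed invariant relation induced by the fibers of $\pi$ can drop entropy by an uncontrolled amount, and "choose $\tilde Z$ compatibly with the fibers of $\pi$" is precisely the step that is not carried out. Note also that the logical order is backwards relative to what works: the paper proves the sofic statement \emph{first} and directly, with no SFT cover, by splicing a high-entropy SFT $Z\subsetneq X$ (with $h(Z)$ near the target $h$, using density of SFT entropies in a sofic shift) into the image --- a $1$-block code $\theta$ keeps $Z$-symbols and applies $\phi$ elsewhere, and an eroding $3$-block code $\alpha$ is iterated so that the nonwandering set of the limiting shift lies in $Y\cup Z$, pinning the intermediate entropy near $h(Z)$ --- and then obtains the SFT statement by composing this with Proposition \ref{prop:decompose_factor_SFT_case_simplified} applied to the first leg. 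To fix your write-up you would need, at minimum, a construction that realizes intermediate subshifts at (approximately) prescribed entropy levels all the way down to $h(Y)$, which is essentially the paper's argument rather than a consequence of the cited limit-point theorem.
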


We remark that by the result of Lindenstrauss \cite{LinE95}, if a factor map $\phi : X \to Y$ between shift spaces is given, then for any $h \in [h(Y),h(X)]$ we can find a topological dynamical system $(Z,T)$ such that there are surjective homomorphisms $\phi_1 : (X,\sigma) \to (Z,T)$ and $\phi_2 : (Z,T) \to (Y,\sigma)$ with $\phi = \phi_2 \circ \phi_1$ and $h(T) = h$. However, even when $X$ and $Y$ are \SFTs, the constructed system $Z$ is far from a shift space. Indeed, if $X$ is a sofic shift then $\cS(\phi)$ must be a countable set since factors of $X$ are sofic and there are countably many real numbers that can appear as the entropies of sofic shifts, namely, rational multiples of logarithms of Perron numbers.

\vspace{0.1cm}
The existence of factor codes and that of embeddings (especially between \rSFTs\ with different entropies)
are closely related in symbolic dynamics \cite{Boy83, Kri82}. In this viewpoint, we turn to decompositions of embeddings in \S 4. Let $\phi : X \to Y$ be an embedding into an irreducible shift space. We are interested in the set $\cT(\phi)$ of all the entropies of the intermediate shift spaces arising from decomposing $\phi$ into embeddings, defined by
\[
    \begin{split}
    \cT(\phi) &= \{ h(\dom(\phi_2)) : \phi = \phi_2 \circ \phi_1 \text{ with } \phi_1 \text{ and } \phi_2 \text{ embeddings} \\
    &\hspace{3.00cm}  \text{ and } \dom(\phi_2) \text{ irreducible} \},
    \end{split}
\]
where $\dom(\phi_2)$ is the domain of $\phi_2$.
We impose an irreducibility condition on the domain of $\phi_2$ since otherwise the problem of finding a decomposition $\phi = \phi_2 \circ \phi_1$ into embeddings with $h(\dom(\phi_2))=h$ is reduced to that of finding a subshift of $Y$ with entropy $h$, and this problem is completely understood in symbolic dynamics.
Define $\cT_0(\phi)$ (resp. $\cT_1(\phi)$) as in $\cT(\phi)$ with additional condition that $\dom(\phi_2)$ is of finite type (resp. sofic).
To exclude a subtle problem at the point $h(X)$, we define $\cT'(\phi) = \cT(\phi) \setminus \{ h(X) \}$ and similarly for $\cT'_0(\phi)$ and $\cT'_1(\phi)$.
It is well known that if $Y$ is a \mSFT\ then $\cT_0(\phi)$ is dense in $[h(X),h(Y)]$ \cite{DGS}. We refine this result and characterize these sets as follows (see Corollary \ref{cor:cT_sofic}, Corollary \ref{cor:cT_0_SFT} and Corollary \ref{cor:cT_1_sofic}). In what follows, $\cP$ (resp. $\cP^w$) denotes the set of all real numbers $h \geq 0$ such that $e^h$ a Perron number (resp. weak Perron number).

\begin{thm} \label{thm:embedding_introduction}
    Let $\phi: X \to Y$ be an embedding from a shift space $X$ into an irreducible sofic shift $Y$. Then we have
    \[
        \cT'(\phi) = (h(X),h(Y)] \text{ and } \cT'_1(\phi) = (h(X),h(Y)] \cap \cP^w.
    \]
    If $X$ and $Y$ are \rSFTs\ with periods $p$ and $q$, respectively, then
    \[
        \cT_0(\phi) = [h(X),h(Y)] \cap \{ h \in \setR : r\cdot h \in \cP \text{ for some } r \in \setN \text{ with } q|r \text { and } r|p \}.
    \]
\end{thm}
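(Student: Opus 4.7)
The plan is to prove each equality by establishing both inclusions, treating the necessary-condition direction uniformly across the three statements and devoting the bulk of the work to a single relative realization construction for the sufficient direction. For the easy ``$\subseteq$'' inclusions, suppose $\phi = \phi_2 \circ \phi_1$ is a decomposition into embeddings with $Z = \dom(\phi_2)$ irreducible; then entropy is monotone under embeddings, so $h(X) \le h(Z) \le h(Y)$, and passing to $\cT'$ removes the endpoint $h(X)$ by definition. When $Z$ is additionally required to be sofic (resp.\ an irreducible SFT), Lind's characterization of the entropies of irreducible sofic shifts (resp.\ irreducible SFTs) gives $h(Z) \in \cP^w$ (resp.\ $h(Z) \in \cP$). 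For the period constraint, I would use that an embedding $A \hookrightarrow B$ between irreducible SFTs forces $\per(B) \mid \per(A)$, since the set of periods of periodic points of $B$ is cofinite in $\per(B)\setN$ and must sit inside the corresponding set for $A$; applied twice, this yields $q \mid \per(Z) \mid p$. Finally, if $\per(Z) = r$, then $e^{r h(Z)}$ is the Perron eigenvalue of a primitive component of $Z$ under $\sigma^r$, so $r h(Z) \in \cP$.

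For the sufficient direction, I fix $h$ in the target set and aim to construct an irreducible subshift $Z$ of $Y$ containing $\phi(X)$, of the prescribed type, with $h(Z) = h$; the decomposition is then $\phi_1 = \phi$ viewed as a map into $Z$ together with $\phi_2$ the inclusion. The case $h = h(Y)$ is immediate ($Z = Y$), so assume $h < h(Y)$. I would work in a presentation of $Y$ --- a minimal left-resolving cover $\pi \colon \tilde{Y} \to Y$ in the sofic case, or $Y$ itself as an edge shift in the SFT case --- and lift $\phi(X)$ to a subgraph. The construction then adjoins an auxiliary irreducible subgraph $W$, disjoint from the lift of $\phi(X)$, together with finitely many connecting paths drawn from the ambient graph of $Y$ so that the combined graph is irreducible. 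Projecting back produces an irreducible subshift $Z \subseteq Y$ containing $\phi(X)$ whose entropy is the logarithm of the Perron value of the combined transition matrix.

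The entropy of $Z$ is tuned by the choice of $W$. Using Lind's realization theorems applied inside the irreducible $Y$, any weak Perron number at most $e^{h(Y)}$, and any Perron number satisfying the period constraint, arises as the entropy of an irreducible sub-SFT or sofic subshift of $Y$ that can serve as $W$; combining this $W$ with $\phi(X)$ via short ambient connecting paths then yields a $Z$ whose Perron value can be tuned to equal exactly $e^h$. For the general $\cT'$ case there is no algebraic constraint, and I would simply take $W$ to be any subshift of $Y$ of the desired real entropy. In the SFT case with prescribed period $r$ satisfying $q \mid r \mid p$ and $rh \in \cP$, I would arrange that $Z$ has period exactly $r$ by ensuring each loop in the combined graph has length divisible by $r$ (automatic for loops in $\phi(X)$ since $r \mid p$; realizable for $W$ via a period-$r$ presentation; enforceable for connecting paths) while the gcd of loop lengths equals $r$ (using $q \mid r$ to find ambient loops of the requisite lengths in $Y$).

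The main obstacle is this \emph{relative} realization problem: Lind's theorems provide absolute existence of sofic or SFT shifts of prescribed entropy and period, but here the realizer must live inside $Y$, must contain the given $\phi(X)$, must be irreducible, must hit the exact target entropy, and (in the SFT case) must have exactly the prescribed period. I expect the bulk of the technical work to lie in this simultaneous entropy-and-period tuning --- combining an auxiliary realizer $W \subset Y$ with $\phi(X)$ via ambient connecting paths, and adjusting via higher-block or shift-power manipulations to bring the Perron root and the period of the resulting irreducible subshift to their exact prescribed values.
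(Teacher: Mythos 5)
Your reduction of the problem is correct and matches the paper's: the ``$\subseteq$'' inclusions follow from entropy monotonicity, the characterization of entropies of irreducible sofic shifts and SFTs, and the divisibility $q \mid \per(Z) \mid p$; and the hard direction amounts to producing, for each admissible $h$, an irreducible subshift $Z$ of the prescribed type with $\phi(X)\subset Z\subset Y$ and $h(Z)=h$ \emph{exactly}. The gap is that you give no working mechanism for this relative realization, and the one you sketch would fail. Gluing a (higher-block) subgraph presentation of $\phi(X)$ to an auxiliary irreducible realizer $W$ by finitely many ambient connecting paths makes the combined graph irreducible and properly larger than the graph of $W$, so its Perron value \emph{strictly} exceeds $e^{h(W)}$; taking $h(W)=h$ therefore overshoots, while taking $h(W)<h$ leaves you needing to land exactly on $e^{h}$ inside a rigid, discrete family of algebraic integers, and the adjustments you invoke cannot do this (higher-block recoding preserves entropy, passing to shift powers multiplies it, lengthening connecting paths still gives a strictly positive overshoot). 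This is precisely where the paper's real work lies, and it proceeds by a different route with no connecting-path surgery at all: it builds, abstractly, a mixing SFT of entropy exactly $h$ whose periodic-point counts are squeezed between those of $X$ and of $Y$ (Proposition \ref{prop:subtuple_thm} plus the blowing-up lemma, Lemma \ref{prop:blowing_up}), embeds $X$ into it by Krieger's embedding theorem, re-coordinatizes so that $X$ actually sits inside it (Lemma \ref{lem:extending_embedding_to_conjugacy}), and then plants it inside $Y$ relatively over $X$ by Boyle's extension lemma (Lemma \ref{thm:Boyle_extension}), so exactness of the entropy is built in from the start.

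Two of the three statements also need ideas absent from your outline. For $\cT'(\phi)$ you must realize every real $h\in(h(X),h(Y))$, including $h\notin\cP^w$, where no sofic or graph-presented object has entropy $h$; ``take $W$ to be any subshift of $Y$ of entropy $h$ and connect'' produces no subshift of controlled entropy, whereas the paper sandwiches: it constructs an increasing chain of mixing sofic shifts $X_n\supset X$ with $h(X_n)\uparrow h$ interlaced with a decreasing chain $Y_n$ with $h(Y_n)\downarrow h$, and takes $Z=\overline{\bigcup_n X_n}$, the upper chain being exactly what forces $h(Z)=h$ (Theorem \ref{thm:exists_subshift}). For $\cT'_1(\phi)$ with $e^{h}$ weak Perron but not Perron, the natural route of realizing an irreducible SFT of entropy $h$ in the cover of $Y$ containing the lift of $\phi(X)$ is impossible whenever the periods of $\phi(X)$ force period one (e.g.\ if $X$ has a fixed point), since an irreducible SFT of period $r$ has $rh\in\cP$; the paper instead produces a strictly sofic $Z$ as the image of an irreducible SFT $W\supset\X_{B_n}$ (a period-$n$ cyclic cover) under a bi-closing extension of the bi-resolving projection, and then extends into $Y$ (Theorem \ref{thm:exists_sofic_with_weakPerron}, via Lemma \ref{lem:BoyK} and Lemma \ref{thm:Boyle_extension}). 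Your forward inclusions and period bookkeeping are fine, but the sufficiency direction --- the substance of the theorem --- is not established by the proposal.
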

We also present characterizations of the sets $\cT$, $\cT_0$ and $\cT_1$ for other cases in \S 4.

\vspace{0.15cm}
\section{Background}
We introduce some terminology and known results. For further details, see \cite{LM}.
A \emph{shift space} (or \emph{subshift}) is a closed $\sigma$-invariant subset of a full shift over some finite set of symbols. For a subshift $X$, denote by $\B_n(X)$ the set of all words of length $n$ appearing in the points of $X$ and $\B(X) = \bigcup_{n \geq 0} \B_n(X)$. Also let $\cA_X = \B_1(X)$. For a finite set $\cA$ and $l \in \setN$, denote by $\cA^l$ the set of all words of length $l$ over $\cA$ and let $\cA^* = \bigcup_{l \geq 0} \cA^l$.

A subshift $X$ is called \emph{nonwandering} if for all $u \in \B(X)$, there is a word $w$ with $uwu \in \B(X)$. It is called \emph{irreducible} if for all $u, v \in \B(X)$, there is a word $w$ with $uwv \in \B(X)$. It is called \emph{mixing} if for all $u, v \in \B(X)$, there is an integer $N \in \setN$ such that whenever $n \geq N$, we can find $w \in \B_n(X)$ with $uwv \in \B(X)$. The \emph{period} of $X$, denoted by $\per(X)$, is the greatest common divisor of periods of all periodic points of $X$. The \emph{entropy} of a shift space is defined by $h(X) = \lim_{n \to \infty} (1/n) \log |\B_n(X)|$, which equals the topological entropy of $(X,\sigma)$ as a dynamical system. By the variational principle, topological entropy is concentrated on the nonwandering set in the sense that if $Z$ is the maximal nonwandering set of $X$ then $h(Z) = h(X)$.

A \emph{code} $\pXtoY$ is a continuous $\sigma$-commuting map between shift spaces. It is called a \emph{factor code} (resp. an \emph{embedding}) if it is surjective (resp. injective). 
Every code can be recoded to be a 1-block code, i.e., a code for which $x_0$ determines $\phi(x)_0$.

Let $X$ be a subshift over a finite set $\cA$. Then one can find a set $\cF_X$ of \emph{forbidden words} so that $X$ is the set of all sequences in $\cA^\setZ$ which do not contain any words in $\cF_X$. If there is such a set $\cF_X$ of words all of which have length $k+1$ with $k \geq 0$, then $X$ is called a (\emph{$k$-step}) \emph{\SFT}. Equivalently, $X$ is $k$-step if every word $v \in \B_k(X)$ is a \emph{synchronizing word} for $X$, i.e., if $uv$ and $vw$ are in $\B(X)$ then we have $uvw \in \B(X)$.
A \emph{sofic shift} is a factor of a \SFT. Given a sofic shift $X$, there is a factor code $\pi : \widetilde X \to X$ where $\widetilde X$ is of finite type with $h(\widetilde X) = h(X)$. Furthermore, if $X$ is irreducible (resp. mixing), then $\widetilde X$ can be chosen to be irreducible (resp. mixing).
Every sofic shift $X$ contains a family of shifts of finite type whose entropies are dense in $[0,h(X)]$ \cite{Mar85}.

A real number $\geq 1$ is called a \emph{Perron number} if it is an algebraic integer which strictly dominates all the other algebraic conjugates. A real number $\lambda$ is a Perron number \ifff\ there is a \mSFT\ $X$ with $h(X) = \log \lambda$. 
Similarly, there is an \rSFT\ $X$ of period $p \in \setN$ with $h(X) = \log \lambda$ \ifff\ $\lambda^p$ is a Perron number. A real number $\lambda \geq 1$ is called a \emph{weak Perron number} if $\lambda^p$ is a Perron number for some $p \in \setN$.

\vspace{0.15cm}
\section{Decompositions of factor codes}

In \cite[Proposition 7.3]{BoyT84}, Boyle and Tuncel considered the notion of a magic diamond to investigate properties of Markovian codes, and obtained the following result.

\begin{prop}\cite{BoyT84} \label{prop:BoyT}
    Let $\pXtoY$ be a factor code between irreducible shifts of finite type. Then every element in $\cS_0(\phi) \setminus \{ h(Y) \}$ is a limit point of $\cS_0(\phi)$ from the left.
\end{prop}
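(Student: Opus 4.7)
The plan is to reduce the proposition to the following assertion: if $\pi : Z \to Y$ is a factor code between irreducible shifts of finite type with $h(Z) > h(Y)$, then for every $\epsilon > 0$ there exist a factor code $\psi : Z \to Z'$ onto an SFT $Z'$ and a factor code $\tau : Z' \to Y$ with $\pi = \tau \circ \psi$ and $h(Z) - \epsilon < h(Z') < h(Z)$. Granting this, every $e \in \cS_0(\phi) \setminus \{ h(Y) \}$ has the form $e = h(Z)$ for an intermediate SFT $Z = \phi_1(X)$, which is irreducible as a factor of the irreducible SFT $X$ and satisfies $h(Z) > h(Y)$ since $e \neq h(Y)$. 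Applying the assertion to $\phi_2 : Z \to Y$ produces factor codes $\psi_n, \tau_n$ for which $\phi = \tau_n \circ (\psi_n \circ \phi_1)$ is a new decomposition whose intermediate SFT has entropy $h(Z_n) \in \cS_0(\phi)$ with $h(Z_n) \nearrow h(Z) = e$, realizing $e$ as a left limit point.

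For the assertion itself, the strategy is to exhibit a \emph{magic diamond} for $\pi$ and use it for a controlled collapse. Since $h(Z) > h(Y)$, the code $\pi$ is not finite-to-one, so after passing to a higher block presentation of $Z$ there exist two distinct paths $P_1, P_2$ of equal length in the graph of $Z$ that share initial and terminal vertices and satisfy $\pi(P_1) = \pi(P_2)$. I then fix a primitive synchronizing word $\alpha \in \B(Z)$ of length $\ell$ (to be chosen large) and define a block code $\psi_\ell$ which acts as the identity except that whenever $P_2$ occurs inside an $\alpha$-marked window it is replaced by $P_1$. Because $\pi(P_1) = \pi(P_2)$, the map $\psi_\ell$ intertwines with $\pi$, so $\pi$ descends to a factor code $\tau_\ell : Z_\ell \to Y$ on the image $Z_\ell := \psi_\ell(Z)$; by working in a deep enough block presentation of $Z$, one realizes $\psi_\ell$ as an amalgamation of states on the presenting graph, which keeps $Z_\ell$ of finite type.

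The main obstacle is verifying $h(Z_\ell) \nearrow h(Z)$. Strict inequality $h(Z_\ell) < h(Z)$ follows because $\psi_\ell$ non-trivially identifies distinct $Z$-points. For the convergence, I plan the following count: each $Z_\ell$-word of length $N$ has at most $2^{N/\ell}$ $Z$-preimages (a binary choice per occurrence of $\alpha$, of which there are at most $N/\ell$ non-overlapping copies in any length-$N$ window by primitivity of $\alpha$), so $h(Z) - h(Z_\ell) \leq (\log 2)/\ell$, which tends to $0$ as $\ell \to \infty$. The most delicate technical point is ensuring that $\psi_\ell$ is an honest sliding block code despite potentially overlapping marker windows and that its image is truly of finite type; this is handled by working in a block presentation of depth exceeding $\ell + |P_1|$, so that the substitution rule reduces to a clean vertex-amalgamation on the presenting graph, and by choosing $\alpha$ synchronizing so that the context of each candidate substitution is unambiguous.
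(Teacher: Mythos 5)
Your overall architecture is reasonable and is in fact in the spirit of the source the paper relies on: the paper does not prove this proposition but cites it from Boyle--Tuncel, whose argument is built on the magic diamond, and your reduction (every $e\in\cS_0(\phi)\setminus\{h(Y)\}$ is $h(Z)$ for an irreducible intermediate \SFT\ $Z$ with $h(Z)>h(Y)$, so it suffices to re-factor $\phi_2:Z\to Y$ through \SFTs\ of entropy slightly below $h(Z)$) is correct, as is the observation that, since $\pi(P_1)=\pi(P_2)$ and the substitution keeps you inside $Z$, one can simply take $\tau_\ell=\pi|_{Z_\ell}$. However, the two claims that carry all the technical weight are not established, and one of them is justified by an argument that cannot work. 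First, you assert that $Z_\ell=\psi_\ell(Z)$ is of finite type because, in a deep enough block presentation, $\psi_\ell$ ``reduces to a clean vertex-amalgamation.'' An amalgamation code is a conjugacy; if $\psi_\ell$ were an amalgamation it would preserve entropy, which is exactly what you need it not to do. The actual route to finite type is to show that $Z_\ell$ equals $Z$ with the finitely many trigger words (marker followed by $P_2$) forbidden, and this requires proving that the simultaneous substitutions neither conflict on overlapping windows nor create new occurrences of the trigger inside the image. Raising the block-presentation depth does not by itself resolve this; one needs a marker with explicit non-self-overlap and non-occurrence properties relative to $\alpha$, $P_1$, $P_2$ (and a lemma that such words of every large length exist in an irreducible \SFT\ of positive entropy). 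This is precisely the kind of analysis the paper spends most of its effort on in its related constructions (the iterated $3$-block map in Theorem \ref{thm:decompose_factor_sofic_case} and the partition into $\cE_j$'s with the self-overlap dichotomy in Proposition \ref{prop:decompose_factor_SFT_case_simplified}), so deferring it is a genuine gap rather than a routine detail.

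Second, your justification of the strict inequality $h(Z_\ell)<h(Z)$ --- that ``$\psi_\ell$ non-trivially identifies distinct $Z$-points'' --- is not a valid reason: finite-to-one factor codes identify plenty of distinct points while preserving entropy. In your setting the correct (and easy) fix is that $Z_\ell\subsetneq Z$, because the trigger word occurs in $Z$ by irreducibility but in no point of $Z_\ell$, and a proper subshift of an irreducible \SFT\ has strictly smaller entropy; but note that this repair is again contingent on the image description of $Z_\ell$ from the first point, i.e., on knowing that substitution never re-creates the trigger. Your preimage-counting bound $h(Z)-h(Z_\ell)\leq(\log 2)/\ell$ is fine in outline, but it too presupposes that the image word together with one binary choice per (pairwise disjoint) marker occurrence reconstructs the source, which is the same well-definedness issue. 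In short: right skeleton, same key idea (a diamond plus sparse marked substitutions) as the cited magic-diamond argument, but the proposal's two load-bearing claims --- finite-typeness of the image and the entropy drop --- are respectively mis-argued and unproven as written.
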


In this section, we extend this result and show that under the condition of Proposition \ref{prop:BoyT}, $\cS_0(\phi)$ is indeed dense in the interval $[h(Y),h(X)]$. We first consider the sofic case in which we do not require intermediate subshifts to be of finite type. This result can also be obtained from the result of Lindenstrauss (see Remark \ref{rem:decompose_factor_sofic_case} (2)).

\begin{thm}\label{thm:decompose_factor_sofic_case}
    Let $\pXtoY$ be a factor code between sofic shifts. Then $\cS(\phi)$ is dense in $[h(Y),h(X)]$.
\end{thm}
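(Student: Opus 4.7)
My plan is to reduce to the case where $\phi$ is a $1$-block code and then, for each integer $N \geq 1$ and each subset $\cC \subseteq \B_N(X)$, construct an intermediate sofic shift $Z_{N,\cC}$ whose entropy interpolates between $h(Y)$ and $h(X)$. Assuming after higher-block recoding that $\phi$ is induced by a symbol map $\bar\phi : \cA_X \to \cA_Y$, I define $Z_{N,\cC}$ over the alphabet $\cC \sqcup \B_N(Y)$ via the $N$-block code
\[
    \psi_1(x)_i \;=\; \begin{cases} x_{[i,i+N-1]}, & \text{if } x_{[i,i+N-1]} \in \cC, \\ \phi(x)_{[i,i+N-1]}, & \text{otherwise,} \end{cases}
\]
setting $Z_{N,\cC} := \psi_1(X)$. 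A $1$-block factor code $\psi_2 : Z_{N,\cC} \to Y$, reading the first letter of each symbol (and applying $\bar\phi$ if that letter lies in $\cA_X$), satisfies $\psi_2 \circ \psi_1 = \phi$, so $h(Z_{N,\cC}) \in \cS(\phi)$. At the extremes, $Z_{N,\emptyset}$ and $Z_{N,\B_N(X)}$ are conjugate to the $N$-block presentations of $Y$ and $X$, respectively.

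Enumerate $\B_N(X) = \{w_1,\ldots,w_M\}$ and set $\cC_j := \{w_1,\ldots,w_j\}$; then $h_j := h(Z_{N,\cC_j})$ increases monotonically from $h(Y)$ to $h(X)$. For each $j$ there is a canonical $1$-block factor code $\rho_j : Z_{N,\cC_j} \to Z_{N,\cC_{j-1}}$ that collapses $w_j \mapsto \phi(w_j)$ and fixes every other symbol; a Bowen-type fiber entropy estimate bounds $h_j - h_{j-1}$ in terms of the topological entropy of the subsystem describing the ways one can upgrade occurrences of the $\B_N(Y)$-symbol $\phi(w_j)$ to the $\B_N(X)$-symbol $w_j$ consistently with some underlying $x \in X$.

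The main obstacle, and the step I expect to require the greatest care, is to ensure $\max_j (h_j - h_{j-1}) \to 0$ as $N \to \infty$; a naive pointwise bound on the fibers only gives $\log 2$, which is not enough for density. The plan is to order $\B_N(X)$ so that at each stage the $\B_N(Y)$-symbol being refined has small asymptotic density in points of the current intermediate $Z_{N,\cC_{j-1}}$ (for instance by batching together $N$-blocks sharing a common $\phi$-image), and to combine this with the telescoping identity $\sum_j (h_j - h_{j-1}) = h(X) - h(Y)$ and standard frequency estimates for individual long blocks in sofic shifts, forcing the maximum jump to shrink uniformly in $j$. Once this uniform control is in hand, the $\{h_j\}_{j=0}^M$ form arbitrarily fine nets of $[h(Y),h(X)]$ contained in $\cS(\phi)$, and letting $N \to \infty$ yields the claimed density.
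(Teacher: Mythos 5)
Your construction of the intermediate shifts $Z_{N,\cC}$ is sound as far as it goes: $\psi_2\circ\psi_1=\phi$, the endpoints $Z_{N,\emptyset}$ and $Z_{N,\B_N(X)}$ are the $N$-block presentations of $Y$ and $X$, and the collapsing codes $\rho_j$ do give $h_{j-1}\leq h_j$, so all the $h_j$ lie in $\cS(\phi)$. The genuine gap is exactly the step you flag yourself, and the plan you sketch for it does not close it. The telescoping identity only controls the \emph{average} jump $(h(X)-h(Y))/M$, never the maximum. The Bowen-type bound for $\rho_j$ requires the frequency of the symbol $\phi(w_j)$ to be small uniformly over \emph{all} points of $Z_{N,\cC_{j-1}}$ (or, in the measure-theoretic version, a bound uniform over invariant measures weighted against their entropy), and this fails in general: if $Y$ has a fixed point or other low-complexity points, some $\B_N(Y)$-symbol occurs with density close to $1$ in some point of the intermediate shift, so the uniform fiber bound remains $\log 2$ no matter how you order $\B_N(X)$. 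Your proposed remedy of batching $N$-blocks with a common $\phi$-image is vacuous precisely in the regime of interest, where $h(X)>h(Y)$ forces many $X$-words to share one image (in the extreme case $Y$ a single fixed point, all of them do). The underlying difficulty your scheme never addresses is the entropy created by \emph{interleaving}: a point of $Z_{N,\cC}$ may alternate kept $\cC$-windows with $\phi$-image windows, and the freedom in where these switches occur can push $h(Z_{N,\cC})$ well above the entropy of either ``pure'' regime, so nothing guarantees that the family $\{h(Z_{N,\cC})\}$ fills in the interval as $N\to\infty$; proving that it does is essentially the whole theorem.

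The paper avoids controlling jumps altogether. Given a target $h$, it first chooses a shift of finite type $Z\subsetneq X$ with $h(Z)$ within $\epsilon/2$ of $h$ (entropies of SFTs inside a sofic shift are dense in $[0,h(X)]$), applies the one-block map that keeps $Z$-symbols and sends everything else through $\phi$, and then iterates an erosion code $\alpha$ that converts any $Z$-symbol adjacent to a $Y$-block back into its $\phi$-image. Forbidding short $Y$-gaps between $Z$-blocks forces the nonwandering set of the limiting shift into $Y\cup Z$, so the entropies of the intermediate shifts decrease to $h(Z)$, and a finite stage $\tilde Z_N$ has entropy within $\epsilon$ of $h$ while still factoring onto $Y$ by a one-block code. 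To rescue your approach you would need an ingredient playing the role of this erosion step (or of the pre-chosen subshift $Z$ of the right entropy), rather than a one-word-at-a-time refinement with jump estimates.
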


\begin{proof}
    Since the case where $h(X) = h(Y)$ is clear, assume that $h(X) > h(Y)$. Let $h \in (h(Y), h(X))$ and $\epsilon > 0$. We claim that there is a decomposition $\phi = \phi_2 \circ \phi_1$ of $\phi$ into factor codes such that $h(\phi_1(X))$ is $\epsilon$-close to $h$. First, find a shift of finite type $Z \subsetneq X$ with $h(Z) > h(Y)$ and $|h(Z) - h| < \epsilon / 2$. 
    By passing to higher block shifts and renaming symbols, we may assume that \begin{enumerate}
        \item[(a)] $\phi$ is a $1$-block code,
        \item[(b)] $Z$ is 1-step,
        \item[(c)] $\A_Z$ and $\A_Y$ are disjoint,
        \item[(d)] if $a, b \in \A_Z$ and $ab \in \B(X)$, then $ab \in \B(Z)$.
    \end{enumerate}
    Let $\A = \A_Z \cup \A_Y$. Define a 1-block code $\theta : X \to \A^\setZ$ by
    \[
        {\theta(x)_i} =
            \begin{cases}
                \phi(x_i)   & \text{if $x_i \notin \A_Z$}   \\
                x_i         & \text{if $x_i \in \A_Z$}   \\
            \end{cases}
    \]
    and let $\tilde Z_0 = \theta(X)$. By the condition (d), each point in $\tilde Z_0$ is uniquely factored as (possibly infinite) $Z$-words and $Y$-words (hence if $u \in \cA_Z^*$ occurs in $\tilde Z_0$, then $u \in \B(\tilde Z_0)$).

    Also define a (3-block) code $\alpha : \A^\setZ \to \A^\setZ$ by
    \[
        {\alpha(x)_i} =
            \begin{cases}
                \phi(x_i)   &   \text{if $x_i \in \A_Z$ and if $x_{i-1} \in \A_Y$ or $x_{i+1} \in \A_Y$} \\
                x_i         &   \text{otherwise}                                    \\
            \end{cases}
    \]
    and, define $\tilde Z_n = \alpha(\tilde Z_{n-1})$ inductively for $n \in \setN$. Note that for each $n \geq 1$, $\tilde Z_n$ is a sofic shift whose language contains no word of the form $a w b$ where $a, b \in \A_Z$ and $w \in \B(Y)$ with $ 1 \leq |w| \leq 2n$. Also $Z \subset \tilde Z_n$ for each $n \in \setN$. Finally, for each $n \in \setN$ define a shift space $\hat Z_n$ over $\A$ obtained by forbidding the set of words
    \[
        \F_n = ( \F_Y \cup \F_{Z} ) \cup \bigcup_{j=1}^{2n} \A_Z \A_Y^j \A_Z,
    \]
    where $\F_Y$ (resp. $\F_{Z}$) is a set of forbidden words of $Y$ (resp. $Z$) over $\cA_Y$ (resp. $\cA_Z$).
    By (d), one can see that $\tilde Z_n \subset \hat Z_n$ for each $n \in \setN$. By letting $\hat Z = \bigcap_{n \in \setN} \hat Z_n$, we have $h(\hat Z) = \lim_n h(\hat Z_n)$ since $\{ \hat Z_n \}_{n \in \setN}$ is a decreasing sequence of shift spaces. Since $\hat Z$ contains no word of the form $a w b$ with $a, b \in \A_Z$ and $w \in \B(Y)$, it follows that
    \[ \B(\hat Z) = \{ u v w : u , w \in \B(Y) \text{ and } v \in \B(Z) \}. \]
    So the nonwandering set of $\hat Z$ is contained in $Y \cup Z$. Since the entropy is concentrated on the nonwandering set and $h(Y) < h(Z)$, we have
    \[ h(Z) \leq \lim_n h(\hat Z_n) = h(\hat Z) \leq h(Z).\]

    Now take $N \in \setN$ such that $|h(\tilde Z_N) - h(Z)| < \epsilon / 2$ and let $\phi_1 = \alpha^N \circ \theta : X \to \tilde Z_N$. Also define a $1$-block code $\phi_2 : \tilde Z_N \to Y$ by letting $\phi_2(z)_i = z_i$ if $z_i \in \A_Y$ and $\phi_2(z)_i = \phi(z_i)$ if $z_i \in \A_Z$. Then $\phi = \phi_2 \circ \phi_1$ and $\phi_2$ is indeed a factor code. The following inequality completes the proof:
    \[
        |h(\phi_1(X)) - h| = |h(\tilde Z_N) - h| \leq |h(\tilde Z_N) - h(Z)| + |h(Z) - h| < \epsilon.
    \]
\end{proof}

\begin{rem}\label{rem:decompose_factor_sofic_case}
    (1) The proof of Theorem \ref{thm:decompose_factor_sofic_case} also applies to the case where $Y$ is a shift space and $X$ is a shift space such that there are shifts of finite type $X_n \subset X$ with $\lim_{n \to \infty} h(X_n) = h(X)$ (for example, $X$ may be an \emph{almost specified shift} \cite{Jung11})

    (2) By corollary of Lindenstrauss' result stated in \S 1, the conclusion of Theorem \ref{thm:decompose_factor_sofic_case} holds for arbitrary factor code $\pXtoY$ between shift spaces. To see this, suppose that we have a factorization $\phi = \phi_2 \circ \phi_1$ where $\phi_1 : X \to Z$ and $\phi_2 : Z \to Y$ are surjective homomorphisms as in \cite[\S 4]{LinE95}. Then $Z$ is zero-dimensional and can be presented as an inverse limit of shift spaces $Z_n$. By uniform continuity of $\phi_2$, there exists $N \in \setN$ such that for every $n \geq N$, if $\pi_n$ is a projection from $Z$ onto $Z_n$, then there is $\gamma_n$ such that $\phi_2 = \gamma_n \circ \pi_n$. So $\phi$ can be factored through the shift space $Z_n$. Since $h(Z) = \lim h(Z_n)$, it follows that $\cS(\phi)$ is dense in $[h(Y),h(X)]$.

\end{rem}

\vspace{0.2cm}
We now prove the case of \SFTs. The heart of Theorem \ref{thm:decompose_factor_SFT_case} lies in the following proposition. For a shift space $X$ and $n \in \setN$, denote by $X^{[n]}$ the \emph{$n$-th higher block shift} of $X$ \cite[\S 1.4]{LM}.

\begin{prop}\label{prop:decompose_factor_SFT_case_simplified}
    Let $\pXtoY$ be a factor code from a \SFT\ $X$ with $h(X) > h(Y)$. Then for any $\epsilon > 0$, there exists a decomposition $\phi = \phi_2 \circ \phi_1$ of $\phi$ into factor codes such that $|h(\phi_1(X)) - h(Y)| < \epsilon$ and $\phi_1(X)$ is of finite type.
\end{prop}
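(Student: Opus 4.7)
The plan is to adapt the construction in the proof of Theorem~\ref{thm:decompose_factor_sofic_case}, with additional care to ensure the intermediate factor is a shift of finite type.

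First, I would pick an SFT $Z \subsetneq X$ with $h(Y) < h(Z) < h(Y) + \epsilon/2$; such a $Z$ exists because, by Marcus's result applied to the sofic (indeed SFT) shift $X$, the entropies of SFTs contained in $X$ are dense in $[0, h(X)]$. After a higher block recoding, I may assume $\phi$ is $1$-block, both $X$ and $Z$ are $1$-step, $\A_Z \cap \A_Y = \emptyset$, and condition (d) of the proof of Theorem~\ref{thm:decompose_factor_sofic_case} holds. I would then form $\theta$, $\alpha^n$, and $\tilde Z_n = \alpha^n \circ \theta(X)$ exactly as before. The entropy bookkeeping of the sofic proof (sandwiching $h(\tilde Z_n)$ between $h(Z)$ and $h(\hat Z_n) \to h(Z)$) gives $h(\tilde Z_n) \to h(Z)$, so for $N$ large the factor code $\phi_1 := \alpha^N \circ \theta$ yields a decomposition $\phi = \phi_2 \circ \phi_1$ with $|h(\tilde Z_N) - h(Y)| < \epsilon$.

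The main obstacle is that $\tilde Z_N$ is in general only sofic, because the bounding shift $\hat Z_n$ is cut out using the possibly infinite set $\F_Y$. To upgrade to SFT I would replace $Y$ in the construction by an SFT cover $\pi : \widetilde Y \to Y$ with $h(\widetilde Y) = h(Y)$ (available since $Y$ is sofic), rerun the construction over the enlarged alphabet $\A_Z \cup \A_{\widetilde Y}$, and postcompose with $\pi$ at the end to land back in $Y$. The subtle point is that the modified $\theta$ must lift $\phi$ through $\pi$ on symbols outside $\A_Z$; such a lift has no global section, but after a further higher-block recoding with window size larger than both the step of $\widetilde Y$ and $2N$, a consistent sliding-block choice of lift can be made. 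Under this modification, the forbidden set cutting out the analogue of $\hat Z_n$ is finite, so the new intermediate $\tilde Z_N^{\#}$ is of finite type, while the nonwandering-set argument and the entropy estimate carry over unchanged.

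The hardest step is constructing this sliding-block lift of $\phi$ through $\pi$ and verifying that it produces a well-defined, shift-equivariant code. I expect this to follow from a careful but routine inspection: condition~(d) controls how $\A_Z$-blocks and $\A_Y$-blocks can be glued, and the $1$-stepness of $\widetilde Y$ lets the choice of lift on each $\A_{\widetilde Y}$-segment be propagated from the $\A_Z$-delimiters on either side (which carry unambiguous data). Once the lift is fixed, the factorization $\phi = \phi_2 \circ \phi_1$ (with $\phi_2$ now a composition with $\pi$ on the $\A_{\widetilde Y}$-part) and the entropy estimate $|h(\tilde Z_N^{\#}) - h(Y)| < \epsilon$ follow as in the proof of Theorem~\ref{thm:decompose_factor_sofic_case}.
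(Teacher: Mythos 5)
The entropy bookkeeping in your first paragraph is fine: choosing a \SFT\ $Z \subsetneq X$ with $h(Y) < h(Z) < h(Y)+\epsilon/2$ and running the construction of Theorem \ref{thm:decompose_factor_sofic_case} does give $|h(\tilde Z_N) - h(Y)| < \epsilon$ for large $N$. The genuine gap is that you never prove the intermediate subshift is of finite type, and your diagnosis of why it fails to be is off target. In that construction the shift $\hat Z_n$ cut out by the forbidden list is only an auxiliary upper bound used for the entropy estimate; the intermediate subshift is the image $\tilde Z_N = \alpha^N(\theta(X))$, and the inclusion $\tilde Z_N \subset \hat Z_N$ is in general strict. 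Replacing $Y$ by a finite-type cover makes the forbidden list finite and hence makes the \emph{bounding} shift an \SFT, but it says nothing about the \emph{image}. Indeed the image is typically strictly sofic even when $Y$ itself is of finite type: for a sequence over $\A_Z \cup \A_{Y}$ to lie in $\tilde Z_N$, every $\A_Y$-segment lying between two $\A_Z$-symbols (and every one-sided or bi-infinite $\A_Y$-segment) must admit a $\phi$-preimage in $X$ compatible with the bordering $\A_Z$-symbols; this is an existence-of-preimage constraint of unbounded range, which finitely many forbidden words cannot capture. This is precisely the difficulty the paper's proof is built to overcome: it recodes to $X^{[n]}$, orders the symbols by their self-overlap properties ($\cE_1, \dots, \cE_N$), and defines $\phi^{(m)}$ so that retained $X^{[n]}$-symbols occur syndetically (every block of length $2mN+1$ in the image contains one) and are synchronizing for the image; that synchronization argument is what makes the image a $(2mN+1)$-step \SFT, and nothing in your construction plays its role.

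A secondary problem is the claimed sliding-block lift of $\phi$ through the cover $\pi : \widetilde Y \to Y$. A point of $X$ whose $\theta$-image contains no $\A_Z$-symbol has no delimiters from which to ``propagate'' a choice of lift, so on such points you need an honest sliding-block lift of $\phi$ through $\pi$; this is a nontrivial lifting problem which has no solution in general for arbitrary (typically infinite-to-one) factor codes, and in any case propagating a choice from a delimiter across an arbitrarily long $\A_{\widetilde Y}$-segment is not a finite-window operation. Even granting the lift, the resulting image would still only be sofic for the reason above, so the proposal as it stands does not prove the proposition.
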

\begin{proof}
    We may assume that $X$ is $1$-step, $\phi$ is $1$-block, and that $\A_X$ and $\A_Y$ are disjoint. Let $\cF_Y$ be a set of forbidden words of $Y$ over $\cA_Y$.

    For each $n \in \setN$ and a symbol $a$ not in $\cA_Y$, define the set $\cF_n(a)$ of words in $( \A_Y \cup \{ a \})^*$ by
    \[
        \cF_n(a) = \cF_Y \cup \Bigl( \bigcup_{i=0}^{[\frac{n}{4}] - 1} a \A_Y^i a \Bigr) ,
    \]
    (by convention we let $\cF_n(a) = \cF_Y$ for $n < 4$) and the shift space $\X_{\cF_n(a)}$ over the alphabet $\A_Y \cup \{a\}$ by forbidding the set $\cF_n(a)$. As $\cF_n(a)$ increases with $n$, the shift space $\X_{\cF_n(a)}$ decreases with $n$. Since $Y \subset \X_{\cF_n(a)}$ for each $n \in \setN$, and the nonwandering set of $\bigcap_{n \in \setN} \X_{\cF_n(a)}$ is contained in $Y$, we have
    \[ h(Y) \leq \lim_n h(\X_{\cF_n(a)}) = h \Bigl( \bigcap_{n \in \setN} \X_{\cF_n(a)} \Bigr) \leq h(Y). \]

    Choose $n \in \setN$ such that $h(\X_{\cF_n(a)}) < h(Y) + \epsilon$. Then indeed we have $h(\X_{\cF_n(\bar a)}) < h(Y) + \epsilon$ for any symbol $\bar a$ not in $\cA_Y$. We may assume that $n$ is a multiple of $4$. Since $X$ is 1-step, so is $X^{[n]}$ and each symbol of $X^{[n]}$ is a synchronizing word for $X^{[n]}$. Let $\cA = \A_{X^{[n]}} \cup \A_Y$. In the remainder of the proof, we will regard an element in $\B_1(X^{[n]})$ as a symbol of $X^{[n]}$ and also as a word in $\B_n(X)$, depending on the context.

    Now we partition $\cA_{X^{[n]}} = \B_1(X^{[n]})$ into disjoint sets $\cE_1, \cE_2, \cdots, \cE_N$ so that $\cE_N$ contains all $X$-words of length $n$ each of which has a ``large" self-overlap. The precise definitions of $\cE_j$'s are as follows:
    \begin{enumerate}
        \item
            For $j = 1, \cdots, N-1$, each set $\cE_j$ contains exactly one symbol $ a^{(j)}$ which, as an $X$-word, does not overlap itself, or overlaps itself only after a shift of more than ${n/4}$ symbols to the right. We require that $\bigcup_{j=1}^{N-1} \cE_j$ exhausts all such symbols.
        \item
            $\cE_N = \B_1(X^{[n]}) \setminus \bigcup_{j=1}^{N-1} \cE_j$.
            Note that, each $b \in \cE_N$, as an $X$-word, can overlap itself after a shift of at most ${n/4}$ symbols to the right (hence each $b$ overlaps itself in at least ${3n/4}$ symbols).
    \end{enumerate}

    Let $m \in \setN$. Define a ($2m+1$)-block code $\phi^{(m)}$ from $X^{[n]}$ to $\A^\setZ$ by
    \[
        {\phi^{(m)}(x)_0} =
            \begin{cases}
                x_0   &   \text{if $x_i \in \bigcup_{k \geq j} \cE_k$ for all $i = -m, \cdots, m$}     \\
                \phi((x_0)_1)         &   \text{otherwise}                                    \\
            \end{cases}
    \]
    where $j$ is the unique number with $x_0 \in \cE_j$ and $(x_0)_1$ is the first symbol of $x_0$ as an $X$-word of length $n$.
    (Intuitively, consider the case $n=1$. Then $\cE_i$'s give an order on the set $\cA_X$ defined by $a < b$ if $a \in \E_i$ and $b \in \E_j$ with $i < j$.
    If we do not see a symbol `less than' $x_0$ in the $[-m,m]$ neighborhood of $x_0$, we leave $x_0$ unchanged. If there is a symbol `less than' $x_0$ in the neighborhood, we map $x_0$ using $\phi$. The idea of $\phi^{(m)}$ is to `leave' $X$ symbols unchanged syndetically for each point in $\phi^{(m)}(X)$.)

    \begin{claim}
        The image $Z_m = \phi^{(m)}(X^{[n]})$ is a $(2mN+1)$-step shift of finite type.
    \end{claim}
    \begin{proof}
        First, we prove that any $Z_m$-word of length ($2mN + 1$) contains a symbol of $X^{[n]}$. Let $w \in \B_{2mN+1}(Z_m)$. Take $z \in Z_m$ with $z_{[-mN,mN]} = w$ and $x \in X^{[n]}$ with $\phi^{(m)}(x) = z$. Also for each $i = -mN, \cdots, mN$, let $j_i$ be the unique index such that $x_i \in \cE_{j_i}$. If $z_0 \in \A_{X^{[n]}}$, we are done. If not, then there exists $i_1 \in [-m,m]$ such that $j_{i_1} < j_0$. If $z_{i_1} \in \A_{X^{[n]}}$, we are done. If not, then there exists $i_2 \in [i_1 - m, i_1 + m]$ such that $j_{i_2} < j_{i_1}$ and so on. This process eventually terminates after at most ($N-1$) steps and there is $i \in [-(N-1)m,(N-1)m]$ such that $j_i \leq j_k$ for $k = i-m, \cdots, i+m$. Then $z_i = x_i \in \A_{X^{[n]}}$, as desired.

        Second, we show that each symbol in $\A_{X^{[n]}}$ is also a synchronizing symbol for $Z_m$. To show this, let $ua, av \in \B(Z_m)$ with $a \in \A_{X^{[n]}}$.
        Take $x, y \in X^{[n]}$ with $\phi^{(m)}(x)_{[-|u|,0]} = ua$ and $\phi^{(m)}(y)_{[0,|v|]} = av$. Since $a \in \A_{X^{[n]}}$ we have $x_0 = y_0 = a$. Then $z = x_{(-\infty,0)}y_{[0,\infty)} \in X^{[n]}$ since $a$ is a synchronizing word for $X^{[n]}$. We claim that $\phi^{(m)}(z)_{[-|u|,|v|]} = uav$. Since $\phi^{(m)}$ has memory and anticipation $m$, we have $\phi^{(m)}(z)_i = \phi^{(m)}(y)_i$ for $i \geq m$. Let $j_0$ be the unique index with $a \in \cE_{j_0}$. Since $ua, av \in \B(Z_m)$, we have $x_k, y_k \in \bigcup_{j \geq j_0} \cE_j$ for each $k = -m, \cdots, m$ and hence $\phi^{(m)}(z)_0 = a$ (see Figure \ref{fig:image_of_z_0}).

        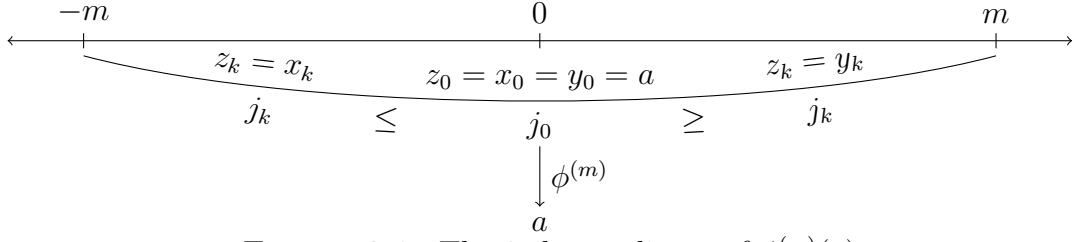
\begin{figure}[h]
            \vspace{-0.5cm}
            \center{
                \begin{tikzpicture}
                    \draw[<->] (0,0) -- (14,0);
                    \draw (1,0.1) -- (1,-0.1) node[pos=0.05, above] {$-m$};
                    \draw (7,0.1) -- (7,-0.1) node[pos=0.05, above] {$0$};
                    \draw (13,0.1) -- (13,-0.1) node[pos=0.05, above] {$m$};
                    \draw (1,-0.2) .. controls (4,-1) and (10,-1) .. (13,-0.2)
                        node[pos=0.22, sloped, above] {$z_k=x_k$} node[pos=0.22, sloped, below] {$j_k$}
                        node[pos=0.35, sloped, below] {$\le$}
                        node[pos=0.5, sloped, above] {$z_0=x_0=y_0=a$} node[pos=0.5, sloped, below] {$j_0$}
                        node[pos=0.65, sloped, below] {$\ge$}
                        node[pos=0.78, sloped, above] {$z_k=y_k$} node[pos=0.78, sloped, below] {$j_k$};
                    \draw [->](7,-1.4) -- (7,-2.2) node[pos=0.5, right] {$\phi^{(m)}$} node[below] {$a$};
                \end{tikzpicture}
            }
            \vspace{-0.5cm}
            \caption{The $0$-th coordinate of $\phi^{(m)}(z)$}\label{fig:image_of_z_0}
        \end{figure}
        \vspace{-0.3cm}

        Now given $i = 1, \cdots, m-1$, since $z_i = y_i \in \bigcup_{j \geq j_0} \cE_{j}$ we have the following cases (see Figure \ref{fig:image_of_z_i}):
        \begin{enumerate}
            \item
                If $z_i \in \bigcup_{j > j_0} \cE_{j} $, then $\phi^{(m)}(z)_i = \phi((z_i)_1) = \phi((y_i)_1) = \phi^{(m)}(y)_i$.
            \item
                If $z_i \in \cE_{j_0}$ and $z_k \in \bigcup_{j \geq j_0} \cE_{j}$ for all $k \in [m+1,m+i]$, then
                \[ \phi^{(m)}(z)_i = z_i = y_i = \phi^{(m)}(y)_i. \]
            \item
                If $z_i \in \cE_{j_0}$ and $z_k \in \bigcup_{j < j_0} \cE_{j}$ for some $k \in  [m+1,m+i]$, then
                \[ \phi^{(m)}(z)_i = \phi((z_i)_1) = \phi((y_i)_1) = \phi^{(m)}(y)_i.\]
        \end{enumerate}
        \begin{figure}[h]
            \center{
                \begin{tikzpicture}
                    \draw[<->] (0,0) -- (9,0);
                    \draw (1,0.1) -- (1,-0.1) node[pos=0.05, above] {$i-m$};
                    \draw (2.5,0.1) -- (2.5,-0.1) node[pos=0.05, above] {$0$};
                    \draw (4.5,0.1) -- (4.5,-0.1) node[pos=0.05, above] {$i$};
                    \draw (6.0,0.1) -- (6.0,-0.1) node[pos=0.05, above] {$m$};
                    \draw (8,0.1) -- (8,-0.1) node[pos=0.05, above] {$i+m$};
                    \draw (1,-0.2) .. controls (3,-1) and (6,-1) .. (8,-0.2)
                        node[pos=0.22, sloped, above] {$z_0=y_0$} node[pos=0.22, sloped, below] {$j_0$}
                        node[pos=0.35, sloped, below] {$<$}
                        node[pos=0.5, sloped, above] {$z_i=y_i$} node[pos=0.5, sloped, below] {$j_i$};
                    \draw [->](4.5,-1.4) -- (4.5,-2.2) node[pos=0.5, right] {$\phi^{(m)}$} node[below] {$\phi((z_i)_1)=\phi((y_i)_1)$};
                \end{tikzpicture}
            }
            \center{
                \begin{tikzpicture}
                    \draw[<->] (0,0) -- (9,0);
                    \draw (1,0.1) -- (1,-0.1) node[pos=0.05, above] {$i-m$};
                    \draw (2.5,0.1) -- (2.5,-0.1) node[pos=0.05, above] {$0$};
                    \draw (4.5,0.1) -- (4.5,-0.1) node[pos=0.05, above] {$i$};
                    \draw (6.0,0.1) -- (6.0,-0.1) node[pos=0.05, above] {$m$};
                    \draw (8,0.1) -- (8,-0.1) node[pos=0.05, above] {$i+m$};
                    \draw (1,-0.2) .. controls (3,-1) and (6,-1) .. (8,-0.2)
                        node[pos=0.22, sloped, above] {$z_0=y_0$} node[pos=0.22, sloped, below] {$j_0$}
                        node[pos=0.35, sloped, below] {$=$}
                        node[pos=0.5, sloped, above] {$z_i=y_i$} node[pos=0.5, sloped, below] {$j_i$}
                        node[pos=0.68, sloped, below] {$\le$}
                        node[pos=0.82, sloped, above] {$\forall z_k=y_k$} node[pos=0.85, sloped, below] {$j_k$};
                    \draw [->](4.5,-1.4) -- (4.5,-2.2) node[pos=0.5, right] {$\phi^{(m)}$} node[below] {$z_i=y_i$};
                \end{tikzpicture}
            }
            \center{
                \begin{tikzpicture}
                    \draw[<->] (0,0) -- (9,0);
                    \draw (1,0.1) -- (1,-0.1) node[pos=0.05, above] {$i-m$};
                    \draw (2.5,0.1) -- (2.5,-0.1) node[pos=0.05, above] {$0$};
                    \draw (4.5,0.1) -- (4.5,-0.1) node[pos=0.05, above] {$i$};
                    \draw (6.0,0.1) -- (6.0,-0.1) node[pos=0.05, above] {$m$};
                    \draw (8,0.1) -- (8,-0.1) node[pos=0.05, above] {$i+m$};
                    \draw (1,-0.2) .. controls (3,-1) and (6,-1) .. (8,-0.2)
                        node[pos=0.22, sloped, above] {$z_0=y_0$} node[pos=0.22, sloped, below] {$j_0$}
                        node[pos=0.35, sloped, below] {$=$}
                        node[pos=0.5, sloped, above] {$z_i=y_i$} node[pos=0.5, sloped, below] {$j_i$}
                        node[pos=0.68, sloped, below] {$>$}
                        node[pos=0.82, sloped, above] {$\exists z_k=y_k$} node[pos=0.85, sloped, below] {$j_k$};
                    \draw [->](4.5,-1.4) -- (4.5,-2.2) node[pos=0.5, right] {$\phi^{(m)}$} node[below] {$\phi((z_i)_1)=\phi((y_i)_1)$};
                \end{tikzpicture}
            }

            \vspace{-0.3cm}
            \caption{The $i$-th coordinate of $\phi^{(m)}(z)$}\label{fig:image_of_z_i}
            \vspace{-0.1cm}
        \end{figure}
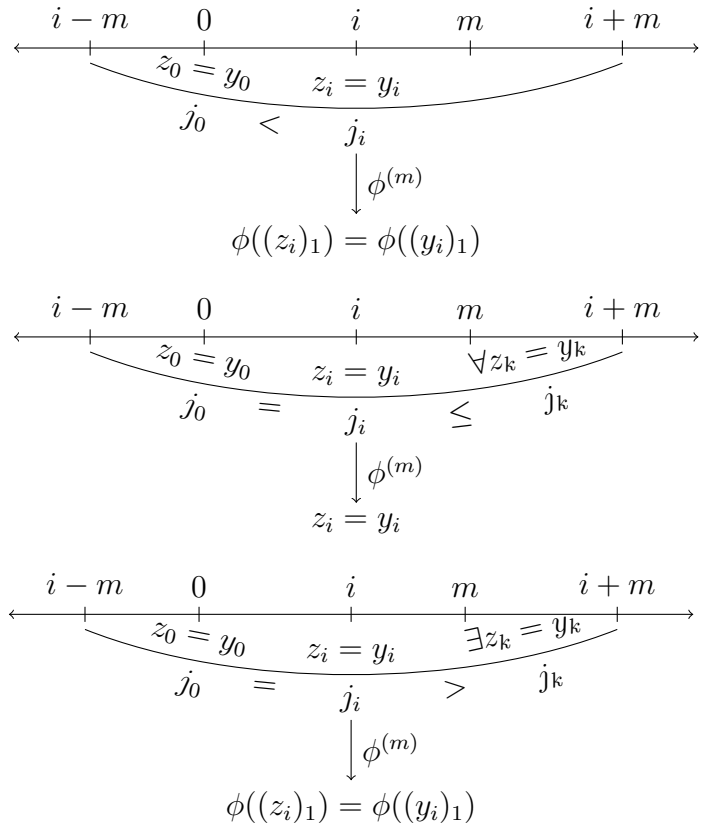
        Hence $\phi^{(m)}(z)_i = \phi^{(m)}(y)_i$ for $i \geq 0$. Similarly we have $\phi^{(m)}(z)_i = \phi^{(m)}(x)_i$ for $i \leq 0$, so $\phi^{(m)}(z)_{[-|u|,|v|]} = uav$, as desired.

        Since every word in $Z_m$ of length ($2mN + 1$) contains a synchronizing word for $Z_m$, it follows that $Z_m$ is a $(2mN+1)$-step \SFT.
    \end{proof}

    Now we give an upper bound of the entropy of $Z_m = \phi^{(m)}(X^{[n]})$. We claim that $Z_m \subset \X_{\tilde \cF_m}$, where $\X_{\tilde \cF_m}$ is the subshift over $\cA$ obtained by forbidding the set of words
    \[ \tilde \cF_m = \cF_Y \cup
        \Bigl( \bigcup_{\substack{i, j = 1 \\ i \neq j}}^{N} \bigcup_{k=0}^{m-1} \cE_i \cA_Y^k \cE_j \Bigr) \cup
        \Bigl( \bigcup_{i=1}^{N-1} \bigcup_{k=0}^{\frac{n}{4} - 1} \cE_i \cA_Y^{k} \cE_i \Bigr) \cup
        \Bigl( \bigcup_{k=1}^{m-1} \cE_N \cA_Y^k \cE_N \Bigr). \]

    To prove the claim, suppose that $z \in Z_m$.
    \begin{enumerate}
        \item[(a)]
            Since any word in $\A_Y^*$ appearing in $z$ comes from applying $\phi$ on an $X$-word of the same length, it follows that a word in $\cF_Y$ cannot occur in $z$.
        \item[(b)]
            Suppose that a word of the form $a^{(i)} w a^{(j)}$, with $a^{(i)} \in \cE_i$, $a^{(j)} \in \cE_j$, $i \neq j$ and $|w| < m$ occurs in $z$. If $i > j$, then the definition of $\phi^{(m)}$ forces $a^{(i)}$ to be mapped by $\phi^{(m)}$ to $\phi((a^{(i)})_1)$ (note that $\phi^{(m)}$ has memory and anticipation $m$), a contradiction. Similarly we have a contradiction if $i < j$.
        \item[(c)]
            If a word of the form $a^{(i)} w a^{(i)}$ with $|w| < n/4$ and $i < N$ occurs in $z$, then the last ($n-|w|-1$) symbols of $a^{(i)}$ (as an $X$-word) must equal the first ($n-|w|-1$) symbols of $a^{(i)}$ because of the overlapping property of $X^{[n]}$. But this is impossible, since each $a^{(i)}, i = 1, \cdots, N-1$ does not overlap itself in $\geq 3n/4$ symbols (as an $X$-word).
        \item[(d)]
            Suppose that a word $a w b$ with $a, b \in \cE_N$ and $w \in \bigcup_{k=1}^{m-1} \cA_Y^k$ occurs in $z$. Let $\gamma$ be a preimage of $awb$ under $\phi^{(m)}$. Then $\gamma$ must contain a subword of the form $a u b \in \B(X^{[n]})$ in the center. However if $u$ contains a symbol in $\bigcup_{k=1}^{N-1} \cE_k$, then $a$ and $b$ must be mapped by $\phi^{(m)}$ to the symbols $\phi(a_1)$ and $\phi(b_1)$, respectively. Also, if $u$ consists of symbols in $\cE_N$, then since $u$ is mapped to $w$ by $\phi^{(m)}$, some symbol in $\bigcup_{k=1}^{N-1} \cE_k$ appears to the left of $a$ or to the right of $b$ within distance $m$ in $\gamma$. Thus at least one of $a$ and $b$ also has to be mapped by $\phi^{(m)}$ to $\phi(a_1)$ or $\phi(b_1)$, which is a contradiction.
    \end{enumerate}
    Thus $z \in \X_{\tilde \cF_m}$ and the claim holds.

    \vspace{0.1cm}
    Now $\bigcap_m \X_{\tilde \cF_m}$ equals the shift space $\X_{\tilde \cF}$ defined by forbidding the set of words $\tilde \cF = \bigcup_{m \in \setN} \tilde \cF_m$. Note that in $\X_{\tilde \cF}$ there is no transition of the form $\cE_i \to \cE_j$ or $\cE_i \to Y \to \cE_j$ with $i \neq j$, and also no transition of the form $\cE_N \to Y \to \cE_N$.
    Thus the nonwandering set of $\X_{\tilde \cF}$ is contained in the set
    \[ Y \cup \big( \bigcup_{i=1}^{N-1} \X_{\cF_n(a^{(i)})} \big) \cup \big( X^{[n]} \cap (\cE_N)^\setZ \big). \]

    By the choice of $n$, we have $h(\X_{\cF_n(a^{(i)})}) < h(Y) + \epsilon$ for each $i = 1, \cdots, N-1$. Also, the shift space $X^{[n]} \cap (\cE_N)^\setZ$ consists only of periodic points. To see this, let $x \in X^{[n]} \cap (\cE_N)^\setZ$. Then each $x_i$ has a self-overlap more than $3n/4$ symbols and for each $i \in \setZ$, $x_i$ and $x_{i+1}$ overlap progressively. It follows that $x_0$ determines the whole point $x$ and $x$ must be periodic.

    Hence it follows that
    \[ \lim_m h(\X_{\tilde \cF_m}) = h(\X_{\tilde \cF}) \leq \max_{1 \leq i < N} \{ h(\X_{\cF_n(a^{(i)})}), h(Y), h \bigl(X^{[n]} \cap ( \cE_N)^\setZ \bigr) \} < h(Y) + \epsilon \]
    as desired. Take $M \in \setN$ so that $h(\X_{\tilde \cF_M}) < h(Y) + \epsilon$ and let $\phi_1 = \phi^{(M)} \circ \beta$, where $\beta : X \to X^{[n]}$ is the $n$-th higher block code. Finally define a 1-block code $\phi_2 : Z_M \to Y$ by sending all symbols $a \in \cA_{X^{[n]}}$ to $\phi((a)_1)$ and $a \in \cA_Y$ to $a$. Then $\phi_1$ and $\phi_2$ satisfy all the conditions.
\end{proof}

Combining Theorem \ref{thm:decompose_factor_sofic_case} and Proposition \ref{prop:decompose_factor_SFT_case_simplified}, we obtain the main theorem in this section.

\begin{thm}\label{thm:decompose_factor_SFT_case}
    Let $\pXtoY$ be a factor code with $X$ of finite type. Then $\cS_0(\phi)$ is dense in $[h(Y),h(X)]$.
\end{thm}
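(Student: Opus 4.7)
The plan is to obtain Theorem \ref{thm:decompose_factor_SFT_case} by concatenating the two previous results: Theorem \ref{thm:decompose_factor_sofic_case} produces an intermediate \emph{sofic} shift of approximately any prescribed entropy, and Proposition \ref{prop:decompose_factor_SFT_case_simplified} allows us to replace such an intermediate sofic shift by an SFT of nearly the same entropy, by applying it not to $\phi$ itself but to the first factor of the already-constructed decomposition.

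First I would dispose of the trivial boundary cases. If $h(X)=h(Y)$, then $[h(Y),h(X)]$ is a point and the decomposition $\phi=\phi\circ\mathrm{id}_X$ shows it lies in $\cS_0(\phi)$. Henceforth assume $h(X)>h(Y)$. The value $h(X)\in\cS_0(\phi)$ by the same trivial decomposition, so it suffices to approximate an arbitrary $h\in[h(Y),h(X))$ from below by elements of $\cS_0(\phi)$.

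Fix such an $h$ and an $\epsilon>0$, and set $\delta=\tfrac{1}{2}\min\{\epsilon,\,h(X)-h\}>0$. By Theorem \ref{thm:decompose_factor_sofic_case}, there exists a decomposition $\phi=\psi_2\circ\psi_1$ into factor codes with $\psi_1:X\to Z$, $\psi_2:Z\to Y$, and $|h(Z)-h|<\delta$. By the choice of $\delta$, $h(Z)<h+\delta<h(X)$, so $\psi_1$ is a factor code from the \SFT\ $X$ onto a sofic shift $Z$ with $h(X)>h(Z)$. Proposition \ref{prop:decompose_factor_SFT_case_simplified} then provides a decomposition $\psi_1=\alpha_2\circ\alpha_1$ into factor codes such that $\alpha_1(X)$ is of finite type and $|h(\alpha_1(X))-h(Z)|<\epsilon/2$. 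Setting $\phi_1=\alpha_1$ and $\phi_2=\psi_2\circ\alpha_2$, we get $\phi=\phi_2\circ\phi_1$ with $\phi_1(X)$ an \SFT\ and
\[
    |h(\phi_1(X))-h|\le|h(\phi_1(X))-h(Z)|+|h(Z)-h|<\epsilon/2+\delta\le\epsilon,
\]
so $h(\phi_1(X))\in\cS_0(\phi)$ is within $\epsilon$ of $h$.

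There is essentially no obstacle here: all of the combinatorial work has been done in Theorem \ref{thm:decompose_factor_sofic_case} and Proposition \ref{prop:decompose_factor_SFT_case_simplified}. The only mild subtlety is making sure that the sofic intermediate $Z$ produced in the first step satisfies the strict inequality $h(Z)<h(X)$ required to apply Proposition \ref{prop:decompose_factor_SFT_case_simplified} to $\psi_1$, which is arranged by choosing $\delta$ small enough relative to $h(X)-h$ and handling the boundary value $h=h(X)$ separately.
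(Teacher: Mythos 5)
Your proposal is correct and follows essentially the same route as the paper's proof: first use Theorem \ref{thm:decompose_factor_sofic_case} to get an intermediate sofic shift with entropy near $h$, then apply Proposition \ref{prop:decompose_factor_SFT_case_simplified} to the first factor code to replace that intermediate shift by an \SFT\ of nearly the same entropy, and compose the remaining codes. Your extra bookkeeping with $\delta$ to guarantee the strict inequality $h(Z)<h(X)$ (and the separate treatment of $h=h(X)$) is a small refinement the paper leaves implicit, not a different argument.
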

\begin{proof}
    The case $h(X) = h(Y)$ is clear. Let $h \in (h(Y),h(X))$ and $\epsilon > 0$. Then by Theorem \ref{thm:decompose_factor_sofic_case}, there is a decomposition $\phi = \xi \circ \psi$ such that $|h(\psi(X)) - h| < \epsilon/2$. Now by Proposition \ref{prop:decompose_factor_SFT_case_simplified}, there is a decomposition $\psi = \eta \circ \phi_1$ such that $\phi_1(X)$ is of finite type and $|h(\phi_1(X)) - h(\psi(X))| < \epsilon/2$. By letting $\phi_2 = \xi \circ \eta$, we are done.
\end{proof}

There are some cases in which we can characterize the sets $\cS(\phi)$ and $\cS_0(\phi)$. For example, if $X$ is a \mSFT\ and $Y = \{ 0^\infty \}$, then $\cS(\phi) = \cS_0(\phi) = [0,h(X)] \cap \cP$, where $\cP$ is the set of all numbers $h \geq 0$ such that $e^h$ is a Perron number as in \S 1.

As relaxed restrictions are given in the construction of an \ito\ factor code between \rSFTs\ with unequal entropies \cite{Boy83, BoyT84, Jung11},
we present the following conjecture. Note that in the following $\cS(\phi)$ is always contained in $[h(Y),h(X)] \cap \cP$.

\begin{conj}
    Let $\pXtoY$ be a factor code between \mSofic s. Then $\cS(\phi) = [h(Y),h(X)] \cap \cP$. If $X$ is of finite type, then $\cS_0(\phi) = [h(Y),h(X)] \cap \cP$.
\end{conj}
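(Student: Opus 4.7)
The inclusion $\cS(\phi) \subseteq [h(Y),h(X)] \cap \cP$ is immediate: any intermediate shift $\phi_1(X)$ is a factor of the \mSofic\ $X$, hence itself \mSofic, so by the background of Section 2 its entropy is the logarithm of a Perron number. The same reasoning gives $\cS_0(\phi) \subseteq [h(Y),h(X)] \cap \cP$ when $X$ is of finite type.

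For the reverse inclusion, fix $h \in (h(Y),h(X)) \cap \cP$. The plan is to refine the construction behind Theorem \ref{thm:decompose_factor_sofic_case} so that the intermediate shift has entropy \emph{exactly} $h$ rather than merely close to $h$. First, choose a \mSFT\ $W$ with $h(W) = h$. A natural preparatory step is to realize $W$ as a mixing SFT subsystem $W' \subset X$ of the same entropy, which should follow by transporting $W$ into a mixing SFT cover of $X$ via Krieger's embedding theorem and then controlling the descent to $X$. With this $W'$, one runs the construction in the proof of Theorem \ref{thm:decompose_factor_sofic_case} using $W'$ in place of the approximating SFT $Z$. The resulting intermediate shift $\tilde Z_N = \alpha^N(\theta(X))$ satisfies $W' \subsetneq \tilde Z_N \subset \hat Z_N$, and the nonwandering set of $\bigcap_N \hat Z_N$ is contained in $W' \cup Y$; however, for each finite $N$ the excess $h(\tilde Z_N) - h(W')$ comes from wandering transitions between $W'$-blocks and $Y$-blocks of length up to order $N$, and this excess vanishes only in the limit.

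The main obstacle is therefore to eliminate these residual transitions in a single bounded-block construction while preserving the factorization $\phi = \phi_2 \circ \phi_1$. A promising candidate is a marker/magic-word scheme in the spirit of Ashley and Krieger: designate a synchronizing word of $W'$ as a marker, recode $X$ so that every image point contains these markers with positive density, and arrange that between consecutive markers the content is either a genuine $W'$-block or a bounded-length $Y$-bridge drawn from a preassigned zero-entropy (or at most sub-$h$) pattern. Then the nonwandering set of the intermediate shift is contained in $W' \cup Y$, giving entropy $\max\{h(W'), h(Y)\} = h$ exactly. The balance of marker density is the delicate point: the markers must be frequent enough to kill the excess entropy, yet not so constraining as to destroy the surjectivity of the subsequent factor code onto $Y$ compatibly with $\phi$. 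For the $\cS_0$ statement, the resulting sofic intermediate shift can then be recoded to finite type using the synchronizing-symbol trick from the proof of Proposition \ref{prop:decompose_factor_SFT_case_simplified}.
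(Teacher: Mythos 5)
The statement you are addressing is posed in the paper as a \emph{conjecture}: the paper gives no proof of it, and only notes the easy containment $\cS(\phi) \subseteq [h(Y),h(X)] \cap \cP$ (which your first paragraph correctly reproves). So the question is whether your sketch settles the hard inclusion $[h(Y),h(X)] \cap \cP \subseteq \cS(\phi)$; it does not. What you offer is a program whose central step is exactly the open content of the conjecture, and as formulated that step does not work. Your marker scheme asks simultaneously for two things that are in tension: (i) $\phi_1$ is a sliding block code with bounded window, forcing markers to occur syndetically (with gaps bounded by some $L$) in every image point, and (ii) the nonwandering set of $\phi_1(X)$ is contained in $W' \cup Y$, so that the entropy is $\max\{h(W'),h(Y)\} = h$. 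With syndetic markers and bounded $Y$-bridges, points that alternate $W'$-blocks and bridges forever are recurrent, so (ii) fails: the nonwandering set genuinely contains the mixed concatenation system. And that system has entropy strictly greater than $h$: decomposing it into marker-to-marker blocks, adding even one admissible bridge block to the family of $W'$-blocks strictly increases the renewal (concatenation) entropy above $h(W') = h$, so ``zero-entropy bridge patterns'' do not save you. This is the same phenomenon that makes the paper's $\tilde Z_N$ have entropy strictly above $h(Z)$ for every finite $N$, with equality only in the limit $\hat Z = \bigcap_N \hat Z_N$ --- which is an intersection, not the image of $X$ under any finite-window code. Your sketch does not explain how to escape this dichotomy, nor does it verify that the proposed $\phi_1$ admits a factor code $\phi_2$ onto $Y$ with $\phi = \phi_2 \circ \phi_1$ (surjectivity and compatibility are asserted, not argued).

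A smaller but real issue: the preparatory step of realizing a mixing SFT $W' \subset X$ with $h(W') = h$ exactly is fine when $X$ is a mixing SFT (Krieger embedding plus the blowing-up adjustments, as in the proof of Theorem \ref{thm:exists_subshift}), but when $X$ is strictly sofic you only get a mixing sofic subsystem by projecting from the cover, and your later use of ``a synchronizing word of $W'$'' and of the SFT recoding trick from Proposition \ref{prop:decompose_factor_SFT_case_simplified} would need to be reworked accordingly. In short: the two containments you prove or cite are already in the paper, and the genuinely new part of your argument is a heuristic whose key claim (exact entropy $h$ for a bounded-window intermediate code) is unsupported and, as stated, false; the conjecture remains open.
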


\vspace{0.3cm}
\section{Decompositions of embeddings}

Let $\pXtoY$ be an embedding. If we construct a subshift $Z$ with $\phi(X) \subset Z \subset Y$, then this gives us a decomposition $\phi = \phi_2 \circ \phi_1$ of $\phi$ into embeddings where $\phi_2 : Z \to Y$ is the canonical embedding given by the inclusion and $\phi_1 : X \to Z$ is equal to $\phi$ except that the codomain of $\phi_1$ is $Z$. Thus finding a decomposition of an embedding into embeddings is closely related to the construction of subshifts lying between two shift spaces. As there are many results on constructing new shift spaces in \mSFTs, we already know more about decomposition of embeddings than that of factors.

\begin{prop}\cite[\S 26]{DGS} \label{prop:embedding_DGS}
    Let $\pXtoY$ be an embedding with $Y$ of finite type. Then $\cT_0(\phi)$ is dense in the interval $[h(X),h(Y)]$.
\end{prop}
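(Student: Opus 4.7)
The plan is to produce, for each target $h \in (h(X),h(Y))$ and each tolerance $\epsilon > 0$, an irreducible SFT $Z$ with $\phi(X) \subset Z \subset Y$ and $|h(Z) - h| < \epsilon$; density of such $h$ in $[h(X),h(Y)]$, together with the trivial endpoint $h(Y)$, then gives the conclusion. After recoding I may assume $\phi$ is a $1$-block code and $Y$ is a $1$-step SFT presented by an irreducible graph $G$ (for a non-irreducible $Y$, one first restricts to the irreducible component containing $\phi(X)$; the definition of $\cT_0(\phi)$ is non-vacuous only in that case, which is the content of the proposition). Write $W = \phi(X)$.

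For each $n \in \setN$ and each $\cF \subset \B_n(Y) \setminus \B_n(W)$, let $Z(\cF) \subset Y$ be the $(n-1)$-step SFT obtained by additionally forbidding the words in $\cF$; then $W \subset Z(\cF) \subset Y$. The extremes are $Z(\emptyset) = Y$, of entropy $h(Y)$, and $Z(\B_n(Y) \setminus \B_n(W))$, whose entropy equals $(1/n)\log|\B_n(W)| + o(1) \to h(X)$ as $n \to \infty$. Enumerate $\B_n(Y) \setminus \B_n(W) = \{w_1,\ldots,w_N\}$ and set $\cF_k = \{w_1,\ldots,w_k\}$; a standard spectral-radius perturbation estimate, applied to the edge-shift matrices of the $(n-1)$-block presentations, shows that each gap $h(Z(\cF_{k-1})) - h(Z(\cF_k))$ is $o(1)$ as $n \to \infty$, uniformly in $k$. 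Hence for $n$ sufficiently large, the finite sequence $(h(Z(\cF_k)))_{k=0}^{N}$ is $\epsilon$-dense in the interval between its endpoints, and some $k$ satisfies $|h(Z(\cF_k)) - h| < \epsilon$.

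The main obstacle is preserving irreducibility of $Z(\cF_k)$ throughout the interpolation. I would resolve this via a marker construction: since $h(W) < h(Y)$, for every sufficiently large $n$ there exists $u \in \B_n(Y) \setminus \B_n(W)$, and irreducibility of $G$ provides a closed path $\gamma$ in $G$ passing through $u$ that also revisits the support of $W$. Ordering the enumeration $\{w_i\}$ so that no word appearing along $\gamma$ is ever placed in $\cF_k$ ensures that every $Z(\cF_k)$ contains $W$ together with this fixed skeleton $\gamma$, hence is irreducible by routing any two admissible blocks through $u$. Because $\gamma$ is a single loop, it contributes only $o(1)$ to the entropy, so the interpolation above is left undisturbed. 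The corresponding decomposition $\phi = \phi_2 \circ \phi_1$, with $\phi_2 : Z(\cF_k) \hookrightarrow Y$ the canonical inclusion and $\phi_1 : X \to Z(\cF_k)$ equal to $\phi$, then witnesses $h(Z(\cF_k)) \in \cT_0(\phi)$, completing the proof.
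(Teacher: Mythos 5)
Your overall strategy (interpolating between $Y$ and the $n$-th SFT approximation of $W=\phi(X)$ by forbidding the words of $\B_n(Y)\setminus\B_n(W)$ one at a time) is genuinely different from the route the paper relies on, but it has a real gap at its central step. The claim that each gap $h(Z(\cF_{k-1}))-h(Z(\cF_k))$ is $o(1)$ as $n\to\infty$, \emph{uniformly in $k$}, is not a ``standard spectral-radius perturbation estimate.'' The standard estimates (e.g.\ forbidding one long word from a full shift or from a fixed mixing \SFT) give an entropy drop that is exponentially small in $n$, but the constants depend on mixing data of the ambient system. Along your chain the intermediate systems $Z(\cF_k)$ are arbitrary $(n-1)$-step \SFTs\ over $\cA_Y$; they need not be irreducible, let alone mixing, and their higher-block graphs have exponentially many vertices, so no uniform perturbation bound is available. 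In the $(n-1)$-block presentation you are deleting a single edge from an uncontrolled graph, and nothing you have said rules out a bottleneck edge whose removal causes a drop bounded away from $0$ independently of $n$. This uniform-gap assertion is exactly where the difficulty of the proposition lives, and as stated it is unproved (and quite possibly false without reordering ideas you have not supplied). The irreducibility repair has a similar problem: retaining a single closed path $\gamma$ through $u$ inside every $Z(\cF_k)$ does not make $Z(\cF_k)$ irreducible, because connecting an arbitrary allowed word of $Z(\cF_k)$ to $\gamma$ requires a connecting word all of whose $n$-subwords avoid $\cF_k$, and the presence of $\gamma$ alone does not provide such routes; nor does it connect the words of $W$ to the rest of $Z(\cF_k)$.

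For comparison, the paper does not reprove this proposition (it is quoted from Denker--Grillenberger--Sigmund, \S 26), and its own stronger statements (Theorem \ref{thm:exists_subshift} and Corollaries \ref{cor:cT_sofic}--\ref{cor:cT_0_SFT}) are obtained by an entirely different mechanism: one builds, for a prescribed $h\in(h(X),h(Y))\cap\cP$, a \mSFT\ of entropy exactly $h$ with a fixed point (Proposition \ref{prop:subtuple_thm}), adjusts its periodic point counts with the Blowing up Lemma so that $q_k(X)\le q_k(W_2)\le q_k(Y)$, embeds $X$ into it by Krieger's Embedding Theorem together with Lemma \ref{lem:extending_embedding_to_conjugacy}, and then extends the inclusion $X\to Y$ to an embedding of the intermediate \SFT\ into $Y$ by Boyle's extension lemma (Lemma \ref{thm:Boyle_extension}). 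That route produces irreducible (indeed mixing) intermediate \SFTs\ with \emph{exact} prescribed entropy, and density follows since $\cP$ is dense; if you want a self-contained proof of the proposition, adapting that machinery is the reliable path, whereas your one-word-at-a-time interpolation would need a genuinely new argument to control both the entropy gaps and irreducibility.
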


In this section, we refine this result and give characterizations of the sets $\cT$, $\cT_0$ and $\cT_1$ for several cases. In contrast to decompositions of factor codes, it is easier to characterize these sets in finite-type case than in sofic case.
We recall some definitions and results. 

For a nonnegative integral square matrix $A$, we denote by $\X_A$ the \emph{edge shift} defined by $A$, i.e., the shift space which consists of all bi-infinite trips on the directed graph with the adjacency matrix $A$. A \SFT\ is conjugate to an edge shift. 
An edge shift $X$ is mixing \ifff\ $X = \X_A$ for some $A$ which is \emph{primitive}, i.e., $A^n > 0$ for some $n \in \setN$. For a shift space $X$, denote by $q_k(X)$ the number of periodic points of $X$ with least period $k$. If $X$ is a \mSFT, then $h(X) = \lim_{k \to \infty} k^{-1} \log q_k(X)$.

The following proposition can be found in \cite[Corollary 10.2]{AdlM} or \cite{Kri79}.

\begin{prop}\label{prop:subtuple_thm}
    Let $X$ be a \mSFT. Then there is a \mSFT\ $\widetilde X$ such that $\widetilde X$ has a fixed point and $h(\widetilde X) = h(X)$.
\end{prop}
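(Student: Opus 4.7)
The plan is to turn the statement into a pure matrix-realization problem: construct a primitive nonnegative integer matrix $\widetilde A$ with Perron eigenvalue equal to $\lambda = e^{h(X)}$ and with at least one positive diagonal entry. By the characterization of entropies of mixing SFTs recalled in Section~2, $\lambda$ is a Perron number, and any such $\widetilde A$ yields a mixing edge SFT $\widetilde X = \X_{\widetilde A}$ of entropy $\log \lambda = h(X)$; the positive diagonal entry corresponds to a self-loop in the graph of $\widetilde A$, and the constant edge sequence along this self-loop is a fixed point of $\widetilde X$.

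First I would recode $X$ as a $1$-step edge SFT $X=\X_A$ with $A$ primitive (by passing to a higher block shift if necessary). If $A$ already has a positive diagonal entry we are done with $\widetilde X = X$, so the only nontrivial case is $\operatorname{tr}(A)=0$. A pure conjugacy argument cannot succeed here, because the number of fixed points is a conjugacy invariant while the strong shift equivalence operations (state splittings, higher block codes) that implement conjugacies preserve it; hence $\widetilde X$ may have to lie in a different topological conjugacy class from $X$, and the size of $\widetilde A$ may differ from that of $A$. I would therefore construct $\widetilde A$ from scratch following \cite[Cor.~10.2]{AdlM} or \cite{Kri79}, starting from some primitive integer matrix realizing $\lambda$ (whose existence is part of the characterization in Section~2) and then modifying its graph so as to introduce a self-loop while preserving the characteristic polynomial, hence the Perron eigenvalue.

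The main obstacle is precisely this last step: producing a positive diagonal entry without perturbing the Perron eigenvalue $\lambda$. A naive addition of a self-loop to an existing vertex strictly increases the spectral radius; adjoining a new vertex with nonnegative entries likewise cannot preserve $\lambda$ on the nose; and ad hoc deletions of edges to compensate tend to destroy either primitivity or strong connectivity. The cited constructions handle this by a careful combination of Perron--Frobenius theory (to track the Perron eigenvalue throughout) with a graph surgery that maintains aperiodicity and irreducibility, exploiting the flexibility gained by allowing $\widetilde A$ to be of larger size than the original matrix. Once such an $\widetilde A$ is in hand, $\widetilde X=\X_{\widetilde A}$ has all the desired properties, completing the proof.
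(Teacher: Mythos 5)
The paper does not actually prove this proposition: it is quoted verbatim from the literature (``can be found in \cite[Corollary 10.2]{AdlM} or \cite{Kri79}''), and your proposal, after reformulating, does the same thing. Your reduction is correct and is indeed the standard way to read the statement: it is equivalent to producing a primitive nonnegative integral matrix with spectral radius $\lambda=e^{h(X)}$ and a positive diagonal entry, and your remarks on why naive surgeries fail (adding a loop or a connected new vertex to a primitive graph strictly increases the spectral radius; conjugacy-type moves preserve the number of fixed points) are accurate. But at the decisive step --- which you yourself single out as ``the main obstacle'' --- you provide no argument: you simply assert that ``the cited constructions handle this.'' Since the proposition \emph{is} essentially the cited result, this is circular as a proof attempt; nothing has been shown beyond what the citation already gives. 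If the intent is to treat the statement as a quoted known fact (as the paper does), then the honest move is to cite \cite[Cor.~10.2]{AdlM} or \cite{Kri79} and stop; if the intent is to prove it, the construction of a primitive matrix with the prescribed Perron value and positive trace must actually be carried out, and that is the entire content of the proposition.

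One technical slip worth fixing: you speak of ``modifying its graph so as to introduce a self-loop while preserving the characteristic polynomial.'' This is impossible at fixed size, since the trace is a coefficient of the characteristic polynomial, and it is not even meaningful when the size grows; all one can (and need) preserve is the Perron eigenvalue $\lambda$, while the rest of the spectrum is allowed to change subject to being dominated by $\lambda$. Note also that entropy-preserving surgeries available elsewhere in this paper do not rescue you here: for instance, Boyle's Blowing up Lemma (Lemma 4.3) only replaces an orbit of period $n$ by orbits of periods $nM_i\ge n$, so it can never create a fixed point unless one already exists; this is precisely why the paper needs Proposition 4.2 as a separate imported ingredient.
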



The following lemma is referred to as Blowing up Lemma.
\begin{lem}\cite{Boy83} \label{prop:blowing_up}
    Let $X$ be an \rSFT\ with $h(X) > 0$ and $q_n(X) > 0$. Let $M_1, \cdots, M_k \geq 1$. Then there is an \rSFT\ $\widetilde X$ such that (1) $q_n(\widetilde X) = q_n(X) - n + n \cdot |\{ i : M_i = 1 \}|$, (2) $q_m(\widetilde X) = q_m(X) + n \cdot |\{ i : nM_i = m \}|$ if $m = nM_i$ for some $M_i > 1$, and (3) $q_i(\widetilde X) = q_i(X)$ for all other $i$.
    Furthermore, if $X$ is mixing then so is $\tilde X$.
\end{lem}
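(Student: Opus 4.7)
The plan is to represent $X$ as the edge shift of an irreducible (and primitive, in the mixing case) finite directed graph $G$, to isolate the given orbit of least period $n$ as a pendant cycle inside $G$ via a sequence of state splittings, and then to perform a purely local surgery: delete this pendant and attach in its place $k$ new simple cycles of lengths $nM_1,\ldots,nM_k$. The guiding idea is that once the cycle is made private, any local modification of it leaves the rest of the periodic-orbit structure intact.

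First, after passing to a suitable higher-block representation I may assume that the given orbit is realized by a simple cycle $C \colon v_0 \to v_1 \to \cdots \to v_{n-1} \to v_0$ with distinct vertices and distinct edges $e_0,\ldots,e_{n-1}$. I then apply a finite cascade of in- and out-splittings at $v_1,\ldots,v_{n-1}$ to obtain a conjugate graph $G'$ in which each intermediate vertex $v_j$ ($1 \le j \le n-1$) has in-degree and out-degree exactly one, both incident edges being the cycle edges of $C$. After this preparation, $C$ has become a pendant cycle attached to the rest of $G'$ only at $v_0$, and none of $v_1,\ldots,v_{n-1}$ or $e_0,\ldots,e_{n-1}$ is used by any periodic orbit other than the one I wish to blow up.

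The surgery itself is then immediate: delete $v_1,\ldots,v_{n-1}$ and $e_0,\ldots,e_{n-1}$ from $G'$ and, for each $i=1,\ldots,k$, attach at $v_0$ a fresh simple cycle of length $nM_i$ using $nM_i-1$ new vertices and $nM_i$ new edges. Let $\widetilde G$ denote the resulting graph and set $\widetilde X = \X_{\widetilde G}$. Because the deleted material was private to the original orbit, every other periodic orbit of $X$ survives unchanged in $\widetilde X$, while the only new periodic orbits come from the $k$ attached pendants; the counting identities (1)--(3) then follow by bookkeeping of these contributions. For the final assertion, the hypothesis $h(X)>0$ forces $v_0$ to retain edges to and from the rest of $G'$ (otherwise $C$ would be a connected component and the entropy would vanish), which keeps $\widetilde G$ strongly connected; in the mixing case, $G$ admits cycles of every sufficiently large length, so deleting the single cycle of length $n$ and adjoining cycles of lengths $nM_1,\ldots,nM_k$ cannot destroy $\gcd=1$ of cycle lengths, and $\widetilde G$ is primitive.

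The main obstacle I anticipate is the isolation step: organizing the in/out splittings at $v_1,\ldots,v_{n-1}$ so that every non-cycle incidence at these vertices is peeled off into a separate split copy, leaving the cycle edges private. This is where real care is required, because a priori the cycle edges may share endpoints with arbitrarily many other paths; one has to split inductively, carrying the pendant through one vertex at a time. Once this structural surgery has been carried out, defining $\widetilde G$, reading off the orbit contributions, and verifying irreducibility/primitivity are essentially routine.
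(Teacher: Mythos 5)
The paper offers no proof of this lemma (it is quoted from \cite{Boy83}), so your argument has to stand on its own, and its central step does not. Even if splittings produce a presentation in which each intermediate vertex $v_1,\dots,v_{n-1}$ has in- and out-degree one, it does \emph{not} follow that the edges $e_0,\dots,e_{n-1}$ are used by no periodic orbit other than the one being blown up. As you yourself note, $h(X)>0$ forces $v_0$ to keep edges to and from the rest of the graph; by irreducibility there is then a closed excursion path $W$ from $v_0$ to $v_0$ not beginning with $e_0$, and the closed walks $C^jW$, $j\ge 1$, give infinitely many periodic orbits distinct from the given one that traverse the cycle edges (the pendant structure forces any orbit entering via $e_0$ to run the whole cycle, but it may then leave at $v_0$ and close up elsewhere). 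This is not an artifact of a bad presentation: in an irreducible SFT of positive entropy the words $(x_0\cdots x_{n-1})^k$ occur in periodic points outside the given orbit, so no conjugacy can make the cycle edges private. Consequently your surgery destroys all of these composite orbits, and symmetrically the freshly attached cycles of lengths $nM_i$ create new composite orbits (wind a new cycle, then take an excursion). Thus $q_m$ changes for infinitely many $m$, and conclusions (1)--(3) fail; the delete-and-attach-pendants construction cannot be repaired by a cleverer choice of splittings.

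What a correct proof must ensure --- and what Boyle's construction is designed for --- is that every \emph{transit} through the modified region (enter at some $v_j$, stay $s$ steps on the cycle, exit at $v_{j+s \bmod n}$) survives with the same length and multiplicity, so that all composite orbits are preserved while only the internal orbit is exchanged. For a single multiplier $M$ this can be achieved by unwinding the cycle $M$ times and attaching each incoming outside edge to one fixed copy of its vertex while duplicating each outgoing outside edge at every copy; handling several multipliers at once while keeping the graph irreducible (and primitive in the mixing case) takes further care, and none of this is visible in your bookkeeping step, which you dismiss as routine. A minor additional caution: a new orbit of least period $nM_i$ contributes $nM_i$ points to $q_{nM_i}$ (points of least period $m$ come in full orbits of size $m$), so when you do the counting you should read the increment in (2) accordingly rather than as $n$ per index.
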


\begin{lem}\label{lem:extending_embedding_to_conjugacy} \cite{JungL}
    Let $X \subset \widetilde X$ be shift spaces and $\phi : X \to Y$ a conjugacy. Then there exist a shift space $\widetilde Y \supset Y$ and a conjugacy $\tilde \phi : \widetilde X \to \widetilde Y$ such that $\tilde \phi|_X = \phi$.
\end{lem}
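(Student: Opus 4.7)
My plan is to extend $\phi$ to a sliding block code on $\widetilde X$ by enlarging the $Y$-alphabet with new symbols that record those finite blocks of $\widetilde X$ lying outside $X$. Suppose $\phi$ is $(m,n)$-block with local rule $f:\B_{m+n+1}(X)\to\A_Y$, and $\phi^{-1}$ is $(m',n')$-block with local rule $g:\B_{m'+n'+1}(Y)\to\A_X$. I would choose a common block length $K$ with $K \geq \max(m+n+1,\, m'+n'+1)$, set $\widetilde{\A}_Y = \A_Y \sqcup (\B_K(\widetilde X)\setminus\B_K(X))$, and extend $f$ to $\widetilde f:\B_K(\widetilde X)\to\widetilde{\A}_Y$ by letting $\widetilde f$ agree with $f$ (applied to the appropriate central sub-block) on $\B_K(X)$ and act as the identity (into the new copy) on $\B_K(\widetilde X)\setminus\B_K(X)$. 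Defining $\widetilde\phi$ as the $K$-block code on $\widetilde X$ with local rule $\widetilde f$, and $\widetilde Y = \widetilde\phi(\widetilde X)$, one checks immediately that $\widetilde\phi|_X = \phi$ and that $Y = \phi(X) \subset \widetilde Y$.

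The key remaining task is to show that $\widetilde\phi$ is injective, hence a conjugacy. Given $\widetilde\phi(\widetilde x) = \widetilde\phi(\widetilde x')$, I would argue coordinate by coordinate. If $\widetilde\phi(\widetilde x)_i$ is a new symbol, it literally encodes $\widetilde x$'s $K$-window around $i$, so $\widetilde x$ and $\widetilde x'$ agree on that window. If $\widetilde\phi(\widetilde x)_i \in \A_Y$, then both $K$-windows lie in $\B_K(X)$ with the same $f$-image, and I would use the local rule $g$ of $\phi^{-1}$ to recover $\widetilde x_i$ from a small surrounding window of $\A_Y$-values; the choice $K \geq m'+n'+1$ guarantees $g$ has enough data whenever its window lies entirely in this ``case-$\A_Y$'' region, while at the frontier with a new symbol the latter pins down $\widetilde x$ directly.

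The main obstacle is making this reconstruction rigorous at the boundary between the ``case-$\A_Y$'' and ``new-symbol'' regions. Having every $K$-subblock of $\widetilde x$ in $\B_K(X)$ does not force $\widetilde x \in X$ for a general shift space, so the $\A_Y$-sequence produced on a pure case-$\A_Y$ stretch need not lie in $\B(Y)$, and the global inverse $\phi^{-1}$ cannot be invoked directly. The argument must treat $g$ purely as a local function on $K$-blocks and check that its position-by-position output, together with the literal coordinate data imposed by any adjacent new symbol, fits together consistently to reconstruct $\widetilde x$ uniquely. Once injectivity is established, reversing any higher-block recoding used at the outset to align the windows of $f$ and $g$ then yields the desired $\widetilde Y \supset Y$ and conjugacy $\widetilde \phi:\widetilde X \to \widetilde Y$ in the original ambient spaces.
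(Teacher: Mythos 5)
The paper does not prove this lemma (it is imported from \cite{JungL}), so there is no internal argument to compare against; judged on its own, your construction -- adjoin to $\A_Y$ one new symbol for each $K$-block of $\widetilde X$ not in $\B_K(X)$, let the extended local rule act by $f$ on blocks of $\B_K(X)$ and by the identity into the new symbols otherwise, and take $\widetilde Y=\widetilde\phi(\widetilde X)$ -- is the natural route and it does succeed. The surjectivity onto a shift space containing $Y$ and the identity $\widetilde\phi|_X=\phi$ are as immediate as you say (no higher-block recoding ever needs to be ``reversed'', since you never recode $X$, $\widetilde X$ or $Y$). The one genuine defect in the write-up is the window size together with the unfinished injectivity check: $K\ge\max(m+n+1,\,m'+n'+1)$ is not enough; you need memory and anticipation at least $m+n+m'+n'$ on \emph{each} side of the window.

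With that choice the reconstruction you sketch closes as follows, and your worry about the case-$\A_Y$ stretches not being $Y$-words evaporates. Write $z=\widetilde\phi(\widetilde x)=\widetilde\phi(\widetilde x')$ and fix a coordinate $i$. If some $z_j$ with $j\in[i-m',i+n']$ is a new symbol, then that symbol literally equals the $K$-window of both $\widetilde x$ and $\widetilde x'$ around $j$, and since the margins exceed $\max(m',n')$ this window contains the coordinate $i$, so $\widetilde x_i=\widetilde x'_i$. Otherwise $z_j\in\A_Y$ for all $j\in[i-m',i+n']$; in particular $z_i\in\A_Y$, so the $K$-window of $\widetilde x$ around $i$ lies in $\B_K(X)$, hence its subword $w=\widetilde x_{[i-m-m',\,i+n+n']}$ is an $X$-word. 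Extend $w$ to a point $x\in X$. For every $j\in[i-m',i+n']$ the window $[j-m,j+n]$ sits inside $[i-m-m',i+n+n']$, so $\phi(x)_j=f(x_{[j-m,j+n]})=f(\widetilde x_{[j-m,j+n]})=z_j$. Thus the only data ever fed to $g$, namely $z_{[i-m',i+n']}$, is a genuine $Y$-word (it occurs in $\phi(x)$), and $g(z_{[i-m',i+n']})=\phi^{-1}(\phi(x))_i=x_i=\widetilde x_i$; the identical computation for $\widetilde x'$ gives $\widetilde x'_i=g(z_{[i-m',i+n']})=\widetilde x_i$. So injectivity holds coordinate by coordinate -- the point is not that the whole $\A_Y$-stretch lies in $\B(Y)$ (it need not), but that each length-$(m'+n'+1)$ window of $z$ used by $g$ is realized inside $\phi(x)$ for an honest point $x\in X$ agreeing with $\widetilde x$ on all coordinates that matter. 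A bijective sliding block code onto its image is a conjugacy, so with the corrected $K$ your proof is complete.
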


We also need the following lemma equivalent to the extension theorem of Boyle, which, in turn, is a slight extension of Krieger's Embedding Theorem \cite{Kri82}.

\begin{lem}\cite[Lemma 2]{Boy88}\label{thm:Boyle_extension}
    Let $\phi$ be an embedding from a shift space $X$ into a \mSFT\ $Y$. If $Z$ is a shift space with $X \subset Z$, $h(Z) < h(Y)$ and $q_k(Z) \leq q_k(Y)$ for all $k \in \setN$ then there is an embedding $\tilde \phi : Z \to Y$ with $\tilde \phi|_X = \phi$.
\end{lem}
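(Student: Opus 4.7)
The plan is to reduce the statement to a ``fix-a-subshift'' version of Krieger's Embedding Theorem via the conjugacy-extension lemma, and then to carry out a marker-and-filler construction in $Y$ that is forced to agree with the inclusion on the pre-embedded copy of $X$.

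First I would apply Lemma \ref{lem:extending_embedding_to_conjugacy} to the conjugacy $\phi : X \to \phi(X)$ together with the ambient pair $X \subset Z$. This produces a shift space $Z^* \supset X'$, where $X' := \phi(X)$, and a conjugacy $\hat\phi : Z \to Z^*$ extending $\phi$. Since conjugacies preserve entropy and periodic-point counts, $h(Z^*) = h(Z) < h(Y)$ and $q_k(Z^*) = q_k(Z) \le q_k(Y)$ for every $k \in \setN$. Thus it suffices to construct an embedding $\psi : Z^* \to Y$ whose restriction to $X'$ is the inclusion $X' \hookrightarrow Y$: then $\tilde\phi := \psi \circ \hat\phi$ is the required extension of $\phi$.

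For the construction of $\psi$ I would adapt Krieger's marker argument. Using $h(X') = h(X) \le h(Z) < h(Y)$ together with the fact that $Y$ is a mixing SFT, one obtains for every sufficiently large $N$ a synchronizing word $w_N$ for $Y$ that lies in $\B(Y) \setminus \B(X')$, and by mixing every long enough $Y$-trajectory that is not contained in $X'$ can be arranged to contain many occurrences of $w_N$ at bounded gaps. I would set $\psi$ equal to the inclusion on $X'$, and on $Z^* \setminus X'$ define $\psi$ by injectively encoding each bounded window into a frame of $Y$ demarcated by consecutive $w_N$'s. This makes $\psi$ continuous and $\sigma$-equivariant, since the local decoding depends only on a bounded neighborhood. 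Injectivity follows from two observations: on each of the pieces $X'$ and $Z^* \setminus X'$ the coding is injective by construction; and the two images are disjoint in $Y$ because every point of $\psi(Z^* \setminus X')$ contains the marker $w_N$, whereas no point of $X'$ does.

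The hard part is the periodic-point bookkeeping. Short orbits in $Z^*$ cannot be framed between widely spaced markers, so each such orbit must be routed by hand to a periodic orbit of $Y$ of the same period, consistently both with the inclusion on $X'$ and with the marker coding on aperiodic points. The inequality $q_k(Z^*) \le q_k(Y)$ is precisely the budget needed for this matching, and the counting/pigeonhole argument carrying it out is the heart of Krieger's original proof \cite{Kri82} in the form used by Boyle \cite{Boy88}. Once this finite matching of low-period orbits is chosen to be compatible with the inclusion on $X'$, the aperiodic marker coding on $Z^* \setminus X'$ and the fixed piece on $X'$ assemble into a single embedding $\psi : Z^* \to Y$, and $\tilde\phi := \psi \circ \hat\phi$ completes the proof.
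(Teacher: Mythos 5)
Your opening reduction is fine: applying Lemma \ref{lem:extending_embedding_to_conjugacy} to the conjugacy $\phi : X \to \phi(X)$ inside $X \subset Z$ correctly reduces the problem to extending the inclusion $X' = \phi(X) \hookrightarrow Y$ to an embedding of a conjugate copy $Z^*$ of $Z$, and entropy and periodic-point data are preserved. (Note that the paper itself offers no proof of this lemma; it is quoted as Lemma 2 of Boyle \cite{Boy88}, so the benchmark is Boyle's relative refinement of Krieger's embedding theorem.) The rest of your argument, however, has a genuine gap exactly where that refinement is hard. A map defined ``piecewise'' as the inclusion on $X'$ and a marker coding on $Z^* \setminus X'$ is not automatically a code: by Curtis--Hedlund--Lyndon it must be a single block code of some window size $M$, and since it restricts to the inclusion on $X'$, its value is forced to equal the central symbol at every coordinate whose $(2M+1)$-window lies in $\B(X')$. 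Consequently, for a point of $Z^* \setminus X'$ that is asymptotic to $X'$, or has a single ``defect'' window, or (since $X$ is an arbitrary shift space, not of finite type) has no defect visible at scale $M$ at all, you cannot insert occurrences of $w_N$ ``at bounded gaps'': away from defects the image is forced to copy the point itself. So the frame-encoding scheme does not exist for such points, the claim that every image of a non-$X'$ point contains the marker while no $X'$ point does is false, and with it the injectivity argument across the two pieces collapses. Handling precisely these relatively asymptotic and ``locally $X'$-like'' points is the heart of Boyle's lemma, and your sketch does not engage with it.

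The second deferral is also substantive: you route all low-period orbits ``by hand'' and appeal to the pigeonhole content of Krieger's proof, but the compatibility of that finite matching with the inclusion on $X'$ (whose periodic points are already pinned down), with the images of nearby aperiodic points, and with the requirement that the resulting assignment be realized by one continuous shift-commuting injection, is again exactly what the cited lemma asserts and what a proof must construct. As written, the proposal is a plausible plan that names the difficulties (marker coding, periodic bookkeeping) but resolves neither; it would not stand as a proof independent of \cite{Boy88} or of a full rerun of Krieger's construction in the relative setting.
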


The following theorem is a key ingredient in characterizing the sets $\cT$ and $\cT_0$ for several cases.

\begin{thm}\label{thm:exists_subshift}
    Let $X \subsetneq Y$ be shift spaces with $Y$ mixing and sofic. Then for any $h \in (h(X),h(Y))$, there is a mixing shift space $Z$ such that $X \subset Z \subset Y$ and $h(Z) = h$. Furthermore, the following hold.
    \begin{enumerate}
        \item If $h \in \cP$, then $Z$ can be chosen to be sofic.
        \item If $h \in \cP$ and $Y$ is of finite type, then $Z$ can be chosen to be of finite type.
    \end{enumerate}
\end{thm}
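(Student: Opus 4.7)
The plan is to establish the theorem in a cascade: first the \SFT\ case (2), then the sofic case (1) by pulling back along a finite-to-one \SFT\ cover of $Y$, and finally the main assertion by using (1) to build an increasing tower of mixing sofic subshifts whose closed union realizes the target entropy.

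For (2), assume $Y$ is a \mSFT\ and $h \in \cP \cap (h(X), h(Y))$. Since $h \in \cP$, there is a \mSFT\ $W_0$ with $h(W_0) = h$, and Proposition \ref{prop:subtuple_thm} lets me assume $W_0$ has a fixed point. I would then apply Lemma \ref{prop:blowing_up} (the Blowing up Lemma) finitely many times to obtain a \mSFT\ $W$ with the same entropy $h$ satisfying $q_k(X) \leq q_k(W) \leq q_k(Y)$ for every $k \in \setN$: beyond some threshold in $k$ the two bounds separate exponentially at rates $h(X)$ and $h(Y)$ and hold automatically, while for the remaining small $k$ the interval $[q_k(X), q_k(Y)]$ is nonempty because $X \subset Y$ and has enough room to absorb the discrete increments produced by the blow-ups. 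By Krieger's Embedding Theorem $X$ then embeds into $W$, and Lemma \ref{lem:extending_embedding_to_conjugacy}, applied to this embedding viewed as a conjugacy onto its image, lets me realize $X$ as a subshift of a \mSFT\ $\tilde W$ conjugate to $W$. Now Lemma \ref{thm:Boyle_extension} extends the inclusion $X \hookrightarrow Y$ to an embedding $\psi : \tilde W \to Y$, and $Z := \psi(\tilde W)$ is a subshift of $Y$ conjugate to $\tilde W$, hence the desired \mSFT\ with $X \subset Z \subset Y$ and $h(Z) = h$.

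For (1), let $\pi : \hat Y \to Y$ be a finite-to-one \mSFT\ cover (for instance the Fischer cover, which is mixing because the mixing sofic $Y$ has period $1$), and put $\hat X := \pi^{-1}(X) \subsetneq \hat Y$. Finite-to-oneness gives $h(\hat X) = h(X)$, so applying (2) to $\hat X \subsetneq \hat Y$ at height $h$ yields a \mSFT\ $\hat Z$ with $\hat X \subset \hat Z \subset \hat Y$ and $h(\hat Z) = h$; pushing down, $Z := \pi(\hat Z)$ is a \mSofic\ subshift of $Y$ containing $X = \pi(\hat X)$ with $h(Z) = h$. For the main assertion with general $h \in (h(X), h(Y))$, choose $h_n \in \cP$ with $h(X) < h_n \nearrow h$ (possible by density of Perron numbers) and inductively construct a chain $X \subset Z_1 \subset Z_2 \subset \cdots \subset Y$ of mixing sofic subshifts with $h(Z_n) = h_n$, obtained at each step by applying (1) to $Z_n \subsetneq Y$ with target entropy $h_{n+1}$. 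Setting $Z := \overline{\bigcup_n Z_n}$, one has $\B(Z) = \bigcup_n \B(Z_n)$, so $h(Z) = \sup_n h_n = h$, and $Z$ is mixing because any two words of $Z$ already sit in a single mixing $Z_n$.

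The main obstacle I anticipate is the periodic-point bookkeeping in the proof of (2): I must simultaneously drive $q_k(W)$ above $q_k(X)$ while keeping it below $q_k(Y)$ for every $k$, using only the rather rigid increments supplied by Lemma \ref{prop:blowing_up} (each operation removes an entire orbit at a chosen period and installs orbits at prescribed multiples of that period, while preserving mixing). For large $k$ the exponential gap between $q_k(X)$ and $q_k(Y)$ leaves ample room, but at small periods the blow-ups must be sequenced carefully — plausibly by first blowing up at period $1$, where a fixed point is guaranteed by Proposition \ref{prop:subtuple_thm}, to seed orbits of each required small period before any further adjustment that might overshoot $q_k(Y)$.
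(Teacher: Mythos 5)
Your treatments of parts (2) and (1) follow the paper's proof essentially step for step (fixed point via Proposition \ref{prop:subtuple_thm}, periodic-point adjustment via Lemma \ref{prop:blowing_up} --- the paper also leaves this bookkeeping to \cite[\S 2]{Boy83} --- then Krieger's Embedding Theorem, Lemma \ref{lem:extending_embedding_to_conjugacy}, Lemma \ref{thm:Boyle_extension}, and the pullback through the minimal right resolving cover). The genuine gap is in your deduction of the main assertion for general $h$: from $\B(Z)=\bigcup_n\B(Z_n)$ you conclude $h(Z)=\sup_n h(Z_n)=h$, and this inference is false. For each fixed $k$ one only gets $|\B_k(Z)|=\sup_n|\B_k(Z_n)|$, and since entropy is an infimum over $k$ of $\frac{1}{k}\log|\B_k(\cdot)|$ this yields only $h(Z)\geq\sup_n h(Z_n)$; the closure of an increasing union of subshifts can have entropy strictly larger than the supremum of the entropies. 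For instance, inside the full $2$-shift let $Z_n$ be the finite (zero-entropy) subshift of all points of period dividing $n!$: the union is dense, so its closure is the full shift, of positive entropy. In your construction each $Z_{n+1}$ is produced by applying (1) inside all of $Y$, with no control other than $Z_{n+1}\supset Z_n$, so nothing prevents the $Z_n$ from accumulating in this way and forcing $h(Z)>h$.

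The paper closes exactly this gap with a sandwich: besides the increasing sequence $\lambda_n\nearrow h$ in $\cP$, it fixes a strictly decreasing sequence $\eta_n\searrow h$ in $\cP$ with $h(X)<\lambda_1<\eta_1<h(Y)$ and builds, inductively and alternately by (1), mixing sofic shifts $X_n$ and $Y_n$ with $X_{n-1}\subset X_n\subset Y_{n-1}$, $h(X_n)=\lambda_n$, and $X_n\subset Y_n\subset Y_{n-1}$, $h(Y_n)=\eta_n$. Then $Z=\overline{\bigcup_n X_n}$ lies in every $Y_n$, so $h(Z)\leq\eta_n\to h$, while $h(Z)\geq\lim_n\lambda_n=h$; the identity $\B(Z)=\bigcup_n\B(X_n)$ is used only to see that $Z$ is mixing, which is the one place where your argument for the limit shift is sound. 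With this decreasing sequence of ceilings inserted into your induction, your outline becomes the paper's proof.
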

\begin{proof}
    First, assume that $Y$ is a \SFT\ and $h \in \cP$, i.e., $e^h$ is a Perron number. Then there is a \mSFT\ $X_1$ such that $h(X_1) = \log e^h = h$. By Proposition \ref{prop:subtuple_thm}, one can find a \mSFT\ $X_2$ such that $X_2$ has a fixed point and $h(X_2) = h(X_1) = h$. Since $h(X) < h(X_2)$, it follows that $q_n(X) < q_n(X_{2})$ for all sufficiently large $n \in \setN$. Thus by applying Blowing up lemma to a fixed point in $X_{2}$, we can find a \mSFT\ $W_1$ such that $h(W_1) = h(X_{2}) = h$ and $q_n(X) \leq q_n(W_1)$ for all $n \in \setN$. 

    Since $h(W_1) < h(Y)$, we have $q_n(W_1) < q_n(Y)$ for all large $n \in \setN$. By applying Blowing up lemma repeatedly to points in $W_1$ having low periods, it is possible to obtain a \mSFT\ $W_2$ such that $h(W_2) = h$ and
    \[ q_n(X) \leq q_n(W_2) \leq q_n(Y) \text{ for all $n \in \setN$}.\]
    (We remark that this is an argument appeared in \cite[\S 2]{Boy83}.)

    By Krieger's Embedding Theorem \cite{Kri82}, there is an embedding from $X$ into $W_2$, and by Lemma \ref{lem:extending_embedding_to_conjugacy} there is a \mSFT\ $W$ such that $W$ is conjugate to $W_2$ and $X \subset W$. Since $Y$ is of finite type, we can extend the inclusion map $i: X \to Y$ to an embedding $\bar i: W \to Y$ using Lemma \ref{thm:Boyle_extension}. Then $Z = \bar i(W)$ is the desired subshift, which proves (2).

    \vspace{0.1cm}
    Next, assume that $h \in \cP$ and $Y$ is merely sofic. Let $\pi : \widetilde Y \to Y$ be a factor code such that $\widetilde Y$ is a mixing \SFT\ and $h(\widetilde Y) = h(Y)$ (e.g., a code given by its minimal right resolving presentation \cite{LM}).
    Let $\widetilde X = \pi^{-1}(X)$. Since $h(\widetilde X) = h(X)$, by (2) we can find a \mSFT\ $\widetilde Z$ such that $\widetilde X \subset \widetilde Z \subset \widetilde Y$ and $h(\widetilde Z) = h$. Then $Z = \pi(\widetilde Z)$ satisfies all the conditions, which proves (1).

    \vspace{0.1cm}
    Finally, we prove the general case. Since $\cP$ is dense in $[0,\infty)$, there are a strictly increasing sequence $\{ \lambda_i \}_{i \in \setN}$ in $\cP$ and a strictly decreasing sequence $\{ \eta_i \}_{i \in \setN}$ in $\cP$ such that $\lambda_i \to h$ and $\eta_i \to h$. Without loss of generality we can assume that $h(X) < \lambda_1 < \eta_1 < h(Y)$. Let $X_0 = X$ and $Y_0 = Y$. For each $n \in \setN$, define $X_n$ and $Y_n$ inductively as follows: Let $X_n$ be a mixing \Sofic\ such that $X_{n-1} \subset X_n \subset Y_{n-1}$ and $h(X_n) = \lambda_n$. Also let $Y_n$ be a \mSofic\ such that $X_{n} \subset Y_{n} \subset Y_{n-1}$ and $h(Y_n) = \eta_n$. Note that (1) guarantees the existence of $X_n$ and $Y_n$ for each $n \in \setN$.

    Now let $Z$ be the closure of the union of $X_n$, $n \in \setN$. Since $\{X_n\}_{n \in \setN}$ is an increasing sequence of mixing shift spaces, $Z$ is also a mixing shift space (which follows from the fact $\B(Z) = \bigcup_n \B(X_n)$). Also we have $h(Z) \geq \lim_{n \to \infty} h(X_n) = h$. Since $Z \subset Y_n$ for all $n \in \setN$, it follows that $h(Z) = h$, which completes the proof.
\end{proof}

\begin{rem}
    A shift space $X$ is called a \emph{coded system} if it contains an increasing sequence of \rSFTs\ whose union is dense in $X$. In fact, one can take $Z$ to be a coded system in Theorem \ref{thm:exists_subshift}.
\end{rem}

The following is just a restatement of Theorem \ref{thm:exists_subshift} (apply the argument in the beginning of this section). Note that $h(Y)$ is clearly contained in the sets $\cT(\phi), \cT_0(\phi)$ and $\cT_1(\phi)$.

\begin{cor}\label{cor:cT_sofic}
    Let $\pXtoY$ be an embedding into a mixing sofic shift $Y$. Then $\cT(\phi) \supset (h(X),h(Y)]$ and $\cT_1(\phi) \supset (h(X),h(Y)] \cap \cP$. If $Y$ is of finite type, then $\cT_0(\phi) \supset (h(X),h(Y)] \cap \cP$.
\end{cor}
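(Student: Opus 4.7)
The plan is to exploit the correspondence, recorded at the start of Section 4, between decompositions of an embedding $\phi : X \to Y$ into embeddings and subshifts $Z$ sitting between $\phi(X)$ and $Y$. Given any such $Z$, the factorization $\phi = \iota_Z \circ \psi$, where $\iota_Z : Z \hookrightarrow Y$ is the inclusion and $\psi : X \to Z$ is $\phi$ with codomain $Z$, is a decomposition into two embeddings with $\dom(\iota_Z) = Z$, so $h(Z) \in \cT(\phi)$ as soon as $Z$ is irreducible, and likewise lies in $\cT_1(\phi)$ (resp. $\cT_0(\phi)$) when $Z$ is sofic (resp. of finite type).

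I would first dispose of the endpoint $h(Y)$: the trivial decomposition $\phi = \mathrm{id}_Y \circ \phi$ puts $h(Y)$ in $\cT(\phi)$, and in $\cT_1(\phi)$ and in $\cT_0(\phi)$ when applicable, since $Y$ is sofic and mixing (hence irreducible). For $h$ strictly between $h(X)$ and $h(Y)$, I would apply Theorem \ref{thm:exists_subshift} to the pair $\phi(X) \subsetneq Y$; note that $\phi(X)$ is a proper subshift of $Y$ because $\phi$ is an embedding and $h(\phi(X)) = h(X) < h(Y)$. The theorem produces a mixing shift space $Z$ with $\phi(X) \subset Z \subset Y$ and $h(Z) = h$. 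Since $Z$ is mixing it is irreducible, so the decomposition above witnesses $h \in \cT(\phi)$. This gives $\cT(\phi) \supset (h(X), h(Y)]$.

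For the refinements, I would invoke the stronger conclusions of Theorem \ref{thm:exists_subshift}: if in addition $h \in \cP$, part (1) lets us take $Z$ to be sofic, yielding $h \in \cT_1(\phi)$ and hence $\cT_1(\phi) \supset (h(X), h(Y)] \cap \cP$; and when $Y$ is of finite type, part (2) lets us take $Z$ to be of finite type, yielding $h \in \cT_0(\phi)$ and hence $\cT_0(\phi) \supset (h(X), h(Y)] \cap \cP$.

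There is no real obstacle here, since the bulk of the work has already been done in Theorem \ref{thm:exists_subshift}; the only point requiring care is to verify that $Z$ qualifies under the defining irreducibility condition on $\dom(\phi_2)$ in the sets $\cT, \cT_0, \cT_1$, but this is immediate from the mixing property of the $Z$ produced. Thus the corollary follows directly by combining the correspondence at the start of Section 4 with the three conclusions of Theorem \ref{thm:exists_subshift}.
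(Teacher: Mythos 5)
Your proposal is correct and is essentially the paper's own argument: the paper proves this corollary exactly by combining the correspondence (subshifts $Z$ with $\phi(X)\subset Z\subset Y$ give decompositions into embeddings) with Theorem \ref{thm:exists_subshift}, noting separately that $h(Y)$ is trivially attained. Your write-up just spells out the details the paper leaves implicit.
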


We now give characterizations of $\cT$ where $Y$ is sofic, and $\cT_0$ where $X$ and $Y$ are of finite type. By reducing from the irreducible sofic case to the mixing finite type case, the following corollaries are immediate.

\begin{cor}\label{cor:cT_sofic}
    Let $\pXtoY$ be an embedding into an irreducible sofic shift. Then $\cT'(\phi) = (h(X),h(Y)]$. If $X$ is irreducible, then $\cT(\phi) = [h(X),h(Y)]$.
\end{cor}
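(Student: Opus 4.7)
The containments $\cT(\phi) \subset [h(X), h(Y)]$ and $\cT'(\phi) \subset (h(X), h(Y)]$ are immediate from the sandwich $h(X) \leq h(Z) \leq h(Y)$ for any intermediate $Z$, and the endpoints $h(Y)$ (and $h(X)$ when $X$ is irreducible) are realized via $Z = Y$ and $Z = \phi(X)$ respectively. So my task reduces to realizing each $h \in (h(X), h(Y))$ as the entropy of some irreducible intermediate shift space.

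My plan is to reduce to the mixing sofic case already handled by Theorem \ref{thm:exists_subshift} by passing to the cyclic decomposition of $Y$. Let $p = \per(Y)$ and write $Y = C_0 \sqcup \cdots \sqcup C_{p-1}$ for the cyclic decomposition into closed $\sigma^p$-invariant subsets with $\sigma(C_i) = C_{i+1}$ (indices mod $p$); after $p$-block recoding, each $C_i$ becomes a mixing sofic shift, and its entropy under the induced $\sigma^p$-action equals $p\cdot h(Y)$. Correspondingly, $\phi(X) = \bigsqcup_{i=0}^{p-1}(\phi(X) \cap C_i)$, where the pieces $\phi(X) \cap C_i = \sigma^i(\phi(X) \cap C_0)$ are cyclically conjugate and have $\sigma^p$-entropy $p\cdot h(X)$. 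Given $h \in (h(X), h(Y))$, I would apply Theorem \ref{thm:exists_subshift} to the strict inclusion $\phi(X) \cap C_0 \subsetneq C_0$ with target $ph \in (ph(X), ph(Y))$, producing a mixing shift space $Z_0$ with $\phi(X) \cap C_0 \subset Z_0 \subset C_0$ and $h(Z_0, \sigma^p) = ph$.

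I would then assemble
\[
    Z \;=\; \bigcup_{i=0}^{p-1} \sigma^i(Z_0) \;\subset\; Y,
\]
which is closed and $\sigma$-invariant, contains $\phi(X) = \bigsqcup_i \sigma^i(\phi(X) \cap C_0)$, and has entropy $h(Z) = \tfrac{1}{p}\, h(Z, \sigma^p) = \tfrac{1}{p}\, h(Z_0, \sigma^p) = h$. The main obstacle I anticipate is verifying that $Z$ is genuinely irreducible of period $p$: since $\sigma$ cyclically permutes the $p$ mixing pieces $\sigma^i(Z_0)$, given any $u,v \in \B(Z)$ occurring in two such pieces $\sigma^i(Z_0)$ and $\sigma^j(Z_0)$, I would combine the mixing of $Z_0$ under $\sigma^p$ (to connect shifted copies within a single piece) with the cyclic action of $\sigma$ (to move between pieces) to produce a connecting word $w$ with $uwv \in \B(Z)$. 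This yields the required irreducibility and finishes the proof.
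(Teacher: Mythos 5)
Your proposal is correct and is exactly the reduction the paper has in mind: the paper dismisses this corollary as immediate by passing from the irreducible sofic case to the mixing case of Theorem \ref{thm:exists_subshift}, and your argument (cyclic decomposition $Y = C_0 \sqcup \cdots \sqcup C_{p-1}$, applying the theorem to $\phi(X)\cap C_0 \subsetneq C_0$ under $\sigma^p$ with target entropy $ph$, reassembling $Z = \bigcup_i \sigma^i(Z_0)$, and gluing words via mixing of the pieces plus the cyclic action) is precisely that reduction with the omitted details supplied. The only cosmetic point is that you do not need $Z$ to have period exactly $p$ (and a general mixing $Z_0$ may even lack periodic points); irreducibility alone is what membership in $\cT(\phi)$ requires, and your sketch does establish it.
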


\begin{cor}\label{cor:cT_0_SFT}
    Let $\pXtoY$ be an embedding between \rSFTs\ $X$ and $Y$ with periods $p$ and $q$, respectively. Then
    \[
        \cT_0(\phi) = [h(X),h(Y)] \cap \{ h \in \setR : r\cdot h \in \cP \text{ for some } r \in \setN \text{ with } q|r \text { and } r|p \}.
    \]
\end{cor}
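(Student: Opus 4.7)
The inclusion $\cT_0(\phi)\subseteq [h(X),h(Y)]\cap\{h : rh\in\cP \text{ for some } r\in\setN \text{ with } q\mid r\mid p\}$ is a period-counting argument. If $\phi=\phi_2\circ\phi_1$ factors through an irreducible SFT $Z$ of period $r'$, then the embedding $X\hookrightarrow Z$ forces $r'\mid p$ (periodic points of $X$ have periods in $p\setN$ and land in periodic points of $Z$ whose periods lie in $r'\setN$), and $Z\hookrightarrow Y$ similarly forces $q\mid r'$; the Perron characterization of entropies of irreducible SFTs recalled in Section~2 then gives $r'h(Z)\in\cP$.

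For the reverse inclusion, fix $h\in[h(X),h(Y)]$ and $r$ with $q\mid r\mid p$ and $rh\in\cP$. The endpoint cases $h=h(X)$ (take $Z=X$) and $h=h(Y)$ (take $Z=Y$) are immediate, so assume $h(X)<h<h(Y)$. The plan is to reduce to the mixing case via the $\sigma^r$-action and apply Theorem~\ref{thm:exists_subshift}(2). Decompose $X=\bigsqcup_{i=0}^{p-1}X_i$ and $Y=\bigsqcup_{j=0}^{q-1}Y_j$ into their mixing components under $\sigma^p$ and $\sigma^q$ respectively; since $\phi(X_0)$ is $\sigma^p$-invariant and mixing it lies in a single $Y_{j_0}$, so relabel to arrange $\phi(X_0)\subset Y_0$. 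Because $q\mid r\mid p$, each $Y_j$ is $\sigma^r$-invariant and $\sigma^r$-mixing (as an SFT) with $\sigma^r$-entropy $rh(Y)$, and each $\tilde X_k:=\bigcup_{i\equiv k\,(\mathrm{mod}\,r)}X_i$ is $\sigma^r$-invariant and $\sigma^r$-irreducible of $\sigma^r$-period $p/r$ with $\sigma^r$-entropy $rh(X)$; using $q\mid r$ one checks $\phi(\tilde X_0)\subset Y_0$.

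Since $rh\in\cP$ lies in the open interval $(rh(X),rh(Y))$, Theorem~\ref{thm:exists_subshift}(2) applied in the $\sigma^r$-category produces a $\sigma^r$-mixing SFT $W_0$ with $\phi(\tilde X_0)\subset W_0\subset Y_0$ and $h(W_0,\sigma^r)=rh$. Setting $Z:=\bigcup_{k=0}^{r-1}\sigma^k(W_0)$ gives a $\sigma$-invariant subset containing $X$, contained in $Y$, of $\sigma$-entropy $h$; its $\sigma^r$-mixing components are the cyclically permuted translates $\sigma^k(W_0)$, so once $Z$ is shown to be a $\sigma$-SFT it is automatically irreducible of some period $r''$ with $q\mid r''\mid r$, and then $r''h\in\cP$ because $Z$ is irreducible, whence $h\in\cT_0(\phi)$.

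The main obstacle is verifying that $Z$ is a $\sigma$-SFT, since a priori a union of $\sigma$-translates of a $\sigma^r$-invariant SFT is $\sigma$-invariant but not obviously cut out by finitely many $\sigma$-forbidden words. The plan is to pass to an edge-shift presentation of $Y$ that makes the period-$q$ block structure explicit; then $A_Y^r$ is block-diagonal with $q$ primitive blocks (one per $Y_j$), and (after a further higher-block recoding if necessary) $W_0$ may be realized as a sub-edge-shift of the block indexed by $0$. The cyclic $\sigma$-translates $\sigma^k(W_0)$ are then sub-edge-shifts of the neighboring blocks, so $Z$ is defined by a finite list of forbidden $(r+1)$-blocks in the alphabet of $Y$, and is a $\sigma$-SFT. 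As a bookkeeping-free alternative, one may first construct an abstract irreducible SFT $\tilde Z$ of period $r$ and entropy $h$ (using Proposition~\ref{prop:subtuple_thm} and Lemma~\ref{prop:blowing_up} to arrange $q_n(X)\le q_n(\tilde Z)\le q_n(Y)$), embed $X$ into $\tilde Z$ via Krieger's embedding theorem (in the irreducible case), and then extend $\phi$ to an embedding $\tilde Z\hookrightarrow Y$ by an irreducible-target analogue of Lemma~\ref{thm:Boyle_extension}.
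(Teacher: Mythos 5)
Your containment $\subseteq$, the endpoint cases, and the idea of passing to $\sigma^r$ on the cyclic components and invoking Theorem~\ref{thm:exists_subshift}(2) are all fine, but the step you yourself flag as the main obstacle --- that $Z=\bigcup_{k=0}^{r-1}\sigma^k(W_0)$ is a shift of finite type --- is a genuine gap, and your proposed fix does not close it when $r>q$. The edge-shift presentation of $Y$ only records positions modulo $q$, so the $r/q$ translates $\sigma^k(W_0)$ with $k$ in a fixed residue class mod $q$ all lie in the \emph{same} mixing component $Y_j$; membership in $W_0$ is a constraint on $r$-blocks aligned to a grid whose phase modulo $r$ is not locally visible in the alphabet of $Y$, so $Z$ is not ``defined by a finite list of forbidden $(r+1)$-blocks.'' Worse, such unions really can fail to be of finite type: take $Y$ the full $2$-shift ($q=1$), $r=2$, and let $W_0$ be the set of points $x$ with $x_{2k}x_{2k+1}\neq 00$ for all $k$, a $\sigma^2$-mixing SFT (the full $3$-shift on $\{01,10,11\}$). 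Then $00\,1^{2n+1}$ and $1^{2n+1}\,00$ belong to the language of $W_0\cup\sigma(W_0)$ for every $n$, while $00\,1^{2n+1}\,00$ does not (the two occurrences of $00$ have different parities), so $W_0\cup\sigma(W_0)$ is not an SFT. Nothing in the construction of $W_0$ via Theorem~\ref{thm:exists_subshift} controls this phase ambiguity (note that the translates $\tilde X_k$ already share \emph{all} words of $X$, so you cannot hope for the translates of $W_0$ to be separated by bounded-length words), so SFT-ness of your $Z$ is simply unproved, and with an unlucky $W_0$ it is false.

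The way to complete the argument is essentially your last-sentence ``alternative,'' organized so that only the statements in the paper are needed. First reduce to mixing $Y$: here the union trick is harmless, because only the $q$ translates $\sigma^k(\hat Z)\subset Y_k$, $0\le k<q$, occur, they lie in distinct components, and in the $\sigma^q$-block presentation they use pairwise disjoint alphabets, so their union is an SFT and is irreducible. With $Y$ mixing, take an abstract irreducible SFT $W'$ of period $r$ with $h(W')=h$ (possible since $rh\in\cP$), use Lemma~\ref{prop:blowing_up} to arrange $q_n(X)\le q_n(W')\le q_n(Y)$ for all $n$ while keeping entropy $h$ and period $r$, embed $X$ into $W'$ (apply the mixing Krieger theorem to $(\tilde X_0,\sigma^r)\to(W'_0,\sigma^r)$ and extend $\sigma$-equivariantly over the cyclic components, so no ``irreducible Krieger'' needs to be quoted), replace $W'$ by a conjugate copy containing $X$ via Lemma~\ref{lem:extending_embedding_to_conjugacy}, and extend the inclusion $X\subset Y$ to an embedding of that copy into $Y$ by Lemma~\ref{thm:Boyle_extension}, which applies verbatim because the target $Y$ is now mixing; the image is the desired intermediate irreducible SFT, of finite type because finite type is a conjugacy invariant. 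This is what the paper's one-line ``reduce from the irreducible case to the mixing finite-type case'' has to carry; your union-of-translates shortcut skips exactly the point where the work lies.
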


\begin{rem}
    Let $\pXtoY$ be an embedding from a nonwandering \SFT\ $X$ into an \rSFT\ $Y$. Then for each $\epsilon > 0$ one can find an \rSFT\ $\widetilde X$ such that the period of $\widetilde X$ is equal to that of $X$, $\phi(X) \subset \widetilde X \subset Y$, and $h(\widetilde X)$ is $\epsilon$-close to $h(X)$. Thus as in Corollary \ref{cor:cT_0_SFT} we have
    \[
        \cT'_0(\phi) = (h(X),h(Y)] \cap \{ h \in \setR : r\cdot h \in \cP \text{ for some } r \in \setN \text{ with } q|r \text { and } r|p \}.
    \]
    However this equality does not hold if $X$ is merely of finite type. For a simple example, let $Y = \{ 0, 1, 2, 3, 4, 5, 6 \}^\setZ$ be the full 7-shift and $X$ the \SFT\ defined by $X = \{ \sigma^k(x) : x \in A \text { and } k \in \setZ \}$, where
    \[
        A = \{ (10)^\infty, (23)^\infty, (10)^\infty.4(23)^\infty, (10)^\infty.56(23)^\infty \}.
    \]
\end{rem}

\vspace{0.15cm}
To prove the case of sofic shifts, we need the following lemmata. In what follows, a \emph{\bic} code is a code which does not collapse two distinct right (or left) asymptotic points. A \bic\ code is topologically conjugate to a \emph{\bir} code, which is well studied in symbolic dynamics. Also, a code is \bic\ \ifff\ it uniformly separates the fibers, i.e., there is an $\epsilon > 0$ such that each preimage of a point in the range is an $\epsilon$-separated set (e.g., see \cite[Lemma 2.6]{Jung09}). For more on closing codes and \bic\ codes, see \cite{Jung09, KitMT91, Nas83}.

\begin{lem}\cite[Lemma 2.4]{BoyK88} \label{lem:BoyK}
    Let $X \subset \widetilde X$ be \SFTs\ and $\pi : X \to Y$ a \bic\ factor code. Then there exist a shift space $\widetilde Y \supset Y$ and a \bic\ factor code $\tilde \pi : \widetilde X \to \widetilde Y$ extending $\pi$ such that for any $x \in \widetilde X \setminus X$, $\tilde \pi(x)$ has a unique preimage.
\end{lem}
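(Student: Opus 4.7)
The plan is to reduce to the case where $\pi$ is a $1$-block \bir\ code obtained by labeling the edges of a graph $G_X$, and where $\widetilde X$ is the edge shift of a graph $G_{\widetilde X}$ containing $G_X$ as a subgraph. Once in that setting, I extend the labeling to all of $G_{\widetilde X}$ by assigning each new edge its own fresh symbol drawn from an alphabet disjoint from $\cA_Y$.

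First, using the standard fact that every \bic\ factor code is topologically conjugate to a \bir\ one, I would recode $\pi$ to be a $1$-block \bir\ code. To propagate this recoding conjugacy from $X$ to the ambient $\widetilde X$, I would invoke Lemma \ref{lem:extending_embedding_to_conjugacy}, obtaining a conjugate pair $X' \subset \widetilde X'$. A further higher-block recoding then realizes $\widetilde X'$ as an edge shift $\X_{G_{\widetilde X}}$ in which $X'$ sits as $\X_{G_X}$ for a subgraph $G_X \subset G_{\widetilde X}$, with the \bir\ labeling $L_0 : E(G_X) \to \cA_Y$ presenting $\pi$ preserved throughout.

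Second, I pick an alphabet $\cA'$ disjoint from $\cA_Y$ and in bijection with $E(G_{\widetilde X}) \setminus E(G_X)$, and extend $L_0$ to a labeling $L : E(G_{\widetilde X}) \to \cA_Y \cup \cA'$ by assigning to each new edge its own distinct symbol in $\cA'$. Let $\tilde\pi : \widetilde X \to \widetilde Y$ be the induced $1$-block code, where $\widetilde Y = \tilde\pi(\widetilde X)$. Then $\tilde\pi$ extends $\pi$ and $\widetilde Y \supset Y$ by construction. To verify that $\tilde\pi$ is \bir, observe that any out-edge from a vertex $v$ of $G_{\widetilde X}$ carrying a label in $\cA'$ is uniquely determined (since each $\cA'$-symbol comes from a single edge globally), while any out-edge from $v$ carrying a label $a \in \cA_Y$ must belong to $G_X$ (no new edge carries an $\cA_Y$-label), so the right-resolving property is inherited from $L_0$; left-resolving is symmetric. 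Finally, if $x \in \widetilde X \setminus X$ then $x$ contains at least one new edge $x_k \in E(G_{\widetilde X}) \setminus E(G_X)$, and $\tilde\pi(x)_k \in \cA'$ determines $x_k$ uniquely; by bi-resolvingness, any $\tilde\pi$-preimage $x'$ of $\tilde\pi(x)$ must agree with $x$ in every coordinate.

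The main obstacle I anticipate is the simultaneous recoding in Step~1: we must put $\pi$ into $1$-block \bir\ form \emph{while also} presenting the pair $(X, \widetilde X)$ as a subgraph inclusion of edge shifts. Coordinating the recoding conjugacy on $X$ with its extension to $\widetilde X$ requires an interleaving of Lemma \ref{lem:extending_embedding_to_conjugacy} with several higher-block recodings, and keeping track of how the labeling $L_0$ transforms under these conjugacies demands some care. Once these compatibility issues are settled, Steps~2 and~3 reduce to routine combinatorics on the labeled graph.
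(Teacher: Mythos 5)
The paper does not prove this lemma at all; it is quoted verbatim from Boyle--Krieger \cite{BoyK88}, so there is no internal argument to compare against. Judged on its own terms, your core construction (steps 2--3) is sound, and you are right to insist on a \bir\ presentation rather than trying to extend a merely \bic\ $1$-block code: if one only labels new symbols freshly over a vertex-shift presentation, two distinct $X$-words with equal $\pi$-image that can be bridged by common new symbols of $\widetilde X$ on both sides produce asymptotic pairs with equal image, so the extension need not be \bic. With edge labelings and bi-resolvingness this cannot happen, and your verification of right/left resolving and of unique preimages off $X$ is correct.

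The genuine gap is in step 1, where you assert that the \bir\ labeling $L_0$ is ``preserved throughout'' a sequence of domain-side recodings. It is not: recoding $X$ to a higher block shift while keeping $Y$ fixed turns a $1$-block code into a $1$-block code only by reading off the first (or last) edge of each block, which destroys right- (resp.\ left-) resolving; and the standard fact that a \bic\ code is conjugate to a \bir\ one is a conjugacy of \emph{codes}, i.e.\ it replaces $Y$ by a conjugate copy $Y'$ as well. So to reach the simultaneous presentation you need (for instance to keep bi-resolvingness you must pass to higher blocks of the range alongside the domain, and label an edge of the final graph by a block of $Y'$-labels), the range must be recoded, and then your construction produces $\tilde\pi' : \widetilde X' \to \widetilde Y'$ extending only the conjugated code $\pi' = \beta \circ \pi \circ \alpha^{-1}$, not $\pi$ itself. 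The missing closing move is to transport back: extend $\beta^{-1} : Y' \to Y$ to a conjugacy $\gamma$ on $\widetilde Y' \supset Y'$ by another application of Lemma \ref{lem:extending_embedding_to_conjugacy} (you invoke that lemma only on the domain side), and set $\tilde\pi = \gamma \circ \tilde\pi' \circ \tilde\alpha$; \bic ness, the extension property $\tilde\pi|_X = \pi$, the inclusion $\widetilde Y \supset Y$, and uniqueness of preimages off $X$ all survive this composition with conjugacies. Without this step your argument proves the lemma only for a conjugate copy of $\pi$, so the reduction ``WLOG $\pi$ is $1$-block \bir\ and $G_X \subset G_{\widetilde X}$'' must be justified by exactly this transport, not by the claim that the labeling is untouched by the recodings.
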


\begin{lem}\cite[Theorem 26.17]{DGS} \label{lem:DGS_2}
    Let $X$ be a mixing \SFT\ and $Z$ a proper subshift of $X$. Then for given $h < h(X)$, there is a mixing \SFT\ $V \subset X$ such that $h(V) > h$ and $V \cap Z = \emptyset$.
\end{lem}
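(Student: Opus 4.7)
My plan is to construct a mixing SFT $V$ inside $X$ whose every point contains a fixed witness word $w$; the condition $V \cap Z = \emptyset$ is then automatic, and the substantive task is to make $h(V) > h$. Since $Z \subsetneq X$, pick $w \in \B(X) \setminus \B(Z)$ with $k := |w|$; by definition of the language, no point of $Z$ contains $w$ as a subword. For each integer $M > k$, define
\[
V_M \;=\; \bigl\{ x \in X : w \text{ occurs in every window } x_{[i,\, i+M-1]}, \, i \in \bZ \bigr\}.
\]
If $X$ is an $m$-step SFT, then $V_M$ is a $\max(m, M-1)$-step SFT, obtained from $X$ by forbidding those $X$-words of length $M$ in which $w$ does not occur. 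The inclusion $V_M \subset X$ is obvious, and since every point of $V_M$ contains $w$ while no point of $Z$ does, $V_M \cap Z = \emptyset$.

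Next I verify that $V_M$ is mixing for every sufficiently large $M$. Let $N_0$ be a mixing constant of $X$, so that for all $a, b \in \B(X)$ and all $n \geq N_0$ there is $\beta \in \B(X)$ of length $n$ with $a\beta b \in \B(X)$. Given $u, v \in \B(V_M)$, I construct a transition $\alpha$ of the form $\beta_0 \, w \, \beta_1 \, w \, \cdots \, w \, \beta_r$ with $|\beta_i| \in [N_0,\, M - k - 1]$, choosing the $\beta_i$ via mixing of $X$ so that $u \alpha v \in \B(X)$. For $M \geq 2(k + N_0)$, varying $r$ and the $|\beta_i|$ allows $|\alpha|$ to attain every sufficiently large integer, which establishes mixing of $V_M$ with a transition length $T_M$ bounded by $k + N_0$ uniformly in $M$.

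The central step is the entropy bound $h(V_M) \to h(X)$ as $M \to \infty$. Given $\epsilon > 0$, fix $n$ with $\tfrac{1}{n}\log|\B_n(X)|$ within $\epsilon/2$ of $h(X)$. The key observation is that for $M$ large relative to $n$ (e.g.\ $M \geq n + 2k + 2N_0$), every $u \in \B_n(X)$ extends to a point of $V_M$: flank $u$ on each side with an $X$-connector of length $\leq N_0$ leading to a copy of $w$, and continue periodically with $w$'s spaced below $M - k$. Hence $\B_n(V_M) = \B_n(X)$, and combining with mixing of $V_M$ gives
\[
h(V_M) \;\geq\; \frac{1}{n + T_M}\, \log |\B_n(V_M)| \;=\; \frac{1}{n + T_M}\, \log |\B_n(X)| \;>\; h(X) - \epsilon
\]
for $n$ chosen large relative to $T_M$. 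Taking $M$ large enough that $h(V_M) > h$ and setting $V = V_M$ completes the proof.

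The principal obstacle I anticipate is the combinatorial bookkeeping in the construction of $\alpha$: one must ensure that the windows of length $M$ straddling the interfaces between $u$ and $\alpha$, and between $\alpha$ and $v$, still contain $w$, which constrains the placement of the endpoints $\beta_0$ and $\beta_r$ relative to the rightmost $w$ in $u$ and the leftmost $w$ in $v$. This forces the splicing near the endpoints to be slightly more delicate than in the interior of $\alpha$. If this direct route proves awkward, a robust alternative is to invoke the unique measure of maximal entropy $\mu$ on the mixing SFT $X$: since $\mu([w]) > 0$, Birkhoff's ergodic theorem supplies an exponential abundance of $X$-periodic orbits visiting $[w]$ with frequency close to $\mu([w])$ and hence lying in $V_M$, which forces $h(V_M)$ to approach $h(X)$ without further combinatorial counting.
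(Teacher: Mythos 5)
The paper does not actually prove this lemma; it is quoted from Denker--Grillenberger--Sigmund (Theorem 26.17), and your construction is essentially the classical argument behind that result: pick a witness word $w\in\B(X)\setminus\B(Z)$ (which exists since a subshift is determined by its language) and pass to the SFT $V_M$ of points in which $w$ occurs in every window of length $M$, so that $V_M\cap Z=\emptyset$ is automatic and the work is the entropy and mixing estimates. Your entropy argument is correct in outline: for $M$ large relative to $n$ one has $\B_n(V_M)=\B_n(X)$, and since words of length $n<M-2k-2N_0$ can be glued inside $V_M$ with connectors of a fixed length independent of $M$ and $n$, the standard concatenation bound gives $h(V_M)\to h(X)$ with no circularity in the choice of $n$ and $M$.

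Two points need tightening. First, chaining connectors into words of the form $\beta_0 w\beta_1 w\cdots w\beta_r$ (and into the bi-infinite extensions used to show $\B_n(V_M)=\B_n(X)$) uses more than mixing of $X$: you must glue at the occurrences of $w$, which requires $w$ to be synchronizing; this is harmless (take $|w|\ge m$ for $X$ an $m$-step SFT, or recode to a $1$-step presentation, noting any extension of $w$ still avoids $\B(Z)$), but it should be said. Relatedly, your claim that $V_M$ is mixing with transition length bounded by $k+N_0$ uniformly over \emph{all} pairs $u,v\in\B(V_M)$ is not quite right: if the last complete $w$ in $u$ ends almost $M-k$ symbols before the end of $u$, the next $w$ must begin inside $u$ and straddle into the connector, so the connector may have to start with a partial copy of $w$ determined by a $V_M$-point through $u$ (the clean fix is to first extend $u$ inside such a point until it ends with a complete $w$, and symmetrically for $v$; the mixing threshold may then depend on $u,v$, which is all that mixing requires, while the uniform bound is only needed for the short words entering the entropy estimate, where it does hold). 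Second, your proposed fallback via the measure of maximal entropy is not sound as stated: visiting $[w]$ with frequency close to $\mu([w])$ does not bound the gaps between occurrences of $w$, so such points or periodic orbits need not lie in $V_M$; moreover an abundance of periodic points of a single period does not by itself lower-bound entropy, so this route would anyway have to return to a gluing argument. Since your main line of proof is the standard one and its gaps are routine to fill, the proposal is essentially correct, but drop or repair the measure-theoretic alternative.
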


\begin{thm}\label{thm:exists_sofic_with_weakPerron}
    Let $X$ be a shift space and $Y$ a \mSFT\ with $X \subsetneq Y$. Then for $h \in (h(X),h(Y)) \cap \cP^w$, there is an \rSofic\ $Z$ with 
    \[ X \subset Z \subset Y \text{ and } h(Z) = h. \]
\end{thm}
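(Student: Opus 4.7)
Since $e^h$ is a weak Perron number, fix $p \in \setN$ with $e^{ph} \in \cP$. The plan is to construct an irreducible SFT $W$ of period $p$ and entropy exactly $h$, embed $X$ into it by Krieger's Embedding Theorem, promote the embedding to an actual inclusion via Lemma \ref{lem:extending_embedding_to_conjugacy}, and then apply Boyle's Extension Lemma (Lemma \ref{thm:Boyle_extension}) to realize $W$ inside $Y$ as an extension of the given inclusion $X \hookrightarrow Y$. The image will be the required irreducible sofic $Z$.

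For the construction of $W$, I first produce a mixing SFT $U$ with $h(U) = p h$, which exists because $e^{ph} \in \cP$, and by Proposition \ref{prop:subtuple_thm} arrange $U$ to have a fixed point. Presenting $U$ on a primitive graph and subdividing every edge into a path of length $p$ yields an irreducible SFT $V$ of period exactly $p$ with $h(V) = h(U)/p = h$, and the fixed point of $U$ lifts to a least-period-$p$ orbit of $V$. Since $h(X) < h = h(V) < h(Y)$, comparison of the exponential growth of periodic points gives $q_k(X) \le q_k(V)$ and $q_k(V) \le q_k(Y)$ for all sufficiently large $k$. Finitely many applications of the Blowing up Lemma (Lemma \ref{prop:blowing_up}) to the period-$p$ orbit of $V$ then correct the low-period counts without changing entropy or period, producing an irreducible SFT $W$ of period $p$ and entropy $h$ with $q_k(X) \le q_k(W) \le q_k(Y)$ for every $k \in \setN$.

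With $W$ in hand, Krieger's Embedding Theorem supplies an embedding $X \hookrightarrow W$; Lemma \ref{lem:extending_embedding_to_conjugacy} upgrades it to an inclusion $X \subset W'$ for some $W' \cong W$, still an irreducible SFT of period $p$ and entropy $h$ satisfying $q_k(W') \le q_k(Y)$. Since $h(W') < h(Y)$, Lemma \ref{thm:Boyle_extension} extends the inclusion $X \hookrightarrow Y$ to an embedding $\psi : W' \hookrightarrow Y$. Setting $Z = \psi(W')$ gives $X \subset Z \subset Y$ with $Z$ an irreducible SFT, hence sofic, of entropy $h$.

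The main obstacle lies in Step 2: combining period-preserving blow-ups with the required sandwiching of periodic-point counts. It is crucial that $V$ carries an orbit of least period exactly $p$ (inherited from the fixed point of $U$), which can be blown up repeatedly without disturbing the period of $V$. A further subtle point is that the periodic points of $X$ must be compatible with the period-$p$ scaffold of $W$ for Krieger's theorem to apply; if some periodic point of $X$ has period not divisible by $p$, one replaces $p$ by a suitable multiple — the set of $k \in \setN$ with $e^{kh} \in \cP$ is closed under positive integer multiples — so that the embedding step remains valid.
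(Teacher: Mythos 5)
Your overall scaffolding (Krieger embedding, Lemma \ref{lem:extending_embedding_to_conjugacy}, Lemma \ref{thm:Boyle_extension}) is sensible, but the core of your construction has a genuine gap: you try to realize $Z$ as (the image of) an \emph{irreducible SFT $W$ of period $p$} containing a copy of $X$, and this is impossible in general. In an irreducible SFT of period $p$ every periodic point has least period divisible by $p$, and an embedding preserves least periods, so $X$ embeds into $W$ only if \emph{all} least periods of periodic points of $X$ are multiples of $p$. The theorem assumes nothing of the sort: take $X=\{0^\infty\}$ inside the full $2$-shift and $h\in\cP^w\setminus\cP$ (e.g.\ $h=\tfrac12\log\lambda$ with $\lambda$ the golden mean); then any irreducible SFT containing $X$ has a fixed point, hence period $1$, hence Perron entropy, so no irreducible SFT of entropy $h$ can contain $X$ at all. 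Your proposed repair --- replacing $p$ by a multiple --- goes the wrong way: enlarging $p$ only strengthens the divisibility constraint, and for the same reason the Blowing up Lemma cannot rescue the sandwich $q_k(X)\le q_k(W)$, since blow-ups of a period-$p$ orbit only create orbits whose periods are multiples of $p$, while $q_k(W)=0$ for every $k$ not divisible by $p$. If you are forced down to $p=1$ you need $e^h$ Perron, which is exactly the case already handled by Theorem \ref{thm:exists_subshift}(2) and not the new (weak Perron) content of this theorem. Note also that your $Z=\psi(W')$ would be an irreducible SFT, whereas the point of the statement is that $Z$ can in general only be taken \emph{sofic}.

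The missing idea, which is how the paper proceeds, is to keep the period-$p$ (there, period-$n$) SFT \emph{away from} $X$ and then glue. Concretely: first enlarge $X$ to a mixing SFT $\X_B$ with $X\subset\X_B\subset Y$ and $h(\X_B)<h$ (Theorem \ref{thm:exists_subshift}(2)); find a mixing SFT $V\subset Y$ \emph{disjoint} from $\X_B$ with $h(V)>h$ (Lemma \ref{lem:DGS_2}); embed the period-$n$ cyclic cover $\X_{B_n}$ of $\X_B$ into $V$ with $n$ chosen so that $nh\in\cP$, and use Corollary \ref{cor:cT_0_SFT} to find an irreducible SFT $W$ with $\X_{B_n}\subset W\subset V$ and $h(W)=h$ (this is legitimate because $W$ need only contain $\X_{B_n}$, whose periods are all multiples of $n$, not $X$ itself). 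Then the bi-resolving projection $\X_{B_n}\to\X_B$ is extended by Lemma \ref{lem:BoyK} to a bi-closing factor code $\tilde\pi:W\to Z_1$ with $\X_B\subset Z_1$; this collapsing step is what produces an irreducible \emph{sofic} shift of entropy exactly $h$ containing $\X_B\supset X$, something no SFT could achieve in the example above. Finally, disjointness of $W$ and $\X_B$ inside $Y$ gives $q_k(Z_1)\le q_k(W)+q_k(\X_B)\le q_k(Y)$, so Lemma \ref{thm:Boyle_extension} extends the inclusion $\X_B\hookrightarrow Y$ to an embedding of $Z_1$, whose image is the desired $Z$. Without some such gluing mechanism (or another way to manufacture a strictly sofic intermediate shift), your argument cannot reach entropies that are weak Perron but not Perron.
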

\begin{proof}
    By Theorem \ref{thm:exists_subshift} (2), there is a \mSFT\ $\widetilde X$ such that $X \subset \widetilde X \subset Y$ and $h(\widetilde X) < h$. By passing to a higher block shift, we may assume that $\widetilde X = \X_B$ for a primitive matrix $B$. Also, by Lemma \ref{lem:DGS_2}, there is a \mSFT\ $V \subset Y$ disjoint from $\widetilde X$ with $h(V) > h$.

    Let $m \in \setN$ be the size of $B$. For each $n \in \setN$ define an $nm \times nm$ matrix $B_n$ by
        $$B_n =
        \begin{pmatrix}
            0 & B & 0 & \cdots & 0 \\
            0 & 0 & B & \cdots & 0 \\
            \vdots & \vdots & \vdots & \ddots & \vdots \\
            0 & 0 & 0 & \cdots & B \\
            B & 0 & 0 & \cdots & 0
        \end{pmatrix}.$$
    Note that $B_n$ is irreducible and $\per(B_n) = n$ for each $n \in \setN$. Since
    \[ h(\X_{B_n}) = h(\X_B) < h < h(V),\]
    by the proof of \cite[Lemma 4.1]{Jung11} the shift space $\X_{B_n}$ embeds into $V$ for all large $n$. Take $n \in \setN$ large so that $n \cdot h \in \cP$ and $\X_{B_n}$ embeds into $V$. For brevity, we also denote by $\X_{B_n}$ the embedded image in $V$.

    Then by Corollary \ref{cor:cT_0_SFT}, there is an \rSFT\ $W$ such that $\X_{B_n} \subset W \subset V$ and $h(W) = h$.
    Note that the natural projection code $\pi$ from $\X_{B_n}$ onto $\X_B$ is \bir\ (e.g., see \cite{Nas83}). Thus by Lemma \ref{lem:BoyK}, we can find a (sofic) shift space $Z_1 \supset \X_{B}$ and a \bic\ factor code $\tilde \pi : W \to Z_1$ extending $\pi$. Since \bic\ codes preserve entropy, we have $h(Z_1) = h(W) < h(Y)$.
    Note that $\tilde \pi$ is 1-1 from $W \setminus \X_{B_n}$ onto $Z_1 \setminus \X_{B}$.
    Also, $W \cap \X_B \subset V \cap \X_B = \emptyset$. It follows that
    \[ q_k (Z_1) \leq q_k(W) + q_k(\X_B) \leq q_k(Y) \]
    for each $k \in \setN$. Thus we can extend an embedding $i : \X_B \to Y$ (given by the inclusion in the beginning of the proof) to an embedding $\bar i : Z_1 \to Y$ by Lemma \ref{thm:Boyle_extension}. The sofic shift $Z = \bar i (Z_1)$ is a desired one (indeed, this is an AFT shift \cite{BoyKM85} since $\tilde \pi$ is \bic).
\end{proof}

The following result follows from the reduction to the finite type and mixing case.
\begin{cor}\label{cor:cT_1_sofic}
    Let $\phi: X \to Y$ be an embedding into an irreducible sofic shift $Y$. Then $\cT'_1(\phi) = (h(X),h(Y)] \cap \cP^w$. If $X$ is irreducible and sofic, then $\cT_1(\phi) = [h(X),h(Y)] \cap \cP^w$.
\end{cor}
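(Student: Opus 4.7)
The proof of the first statement $\cT'_1(\phi) = (h(X),h(Y)] \cap \cP^w$ splits into two inclusions. The inclusion $\cT'_1(\phi) \subseteq (h(X),h(Y)] \cap \cP^w$ is immediate: any element of $\cT_1(\phi)$ equals $h(Z)$ for an \rSofic\ $Z$ with $\phi_1(X) \subset Z \subset Y$, so $h(X) \leq h(Z) \leq h(Y)$; as noted in \S 2, an \rSofic\ of period $p$ has entropy $\log\lambda$ with $\lambda^p$ a Perron number, so $\lambda$ is weak Perron and $h(Z) \in \cP^w$. Deleting the point $h(X)$ yields the desired containment.

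For the reverse inclusion, fix $h \in (h(X),h(Y)] \cap \cP^w$. The case $h = h(Y)$ is trivial (take $Z = Y$), so assume $h(X) < h < h(Y)$. The plan is to reduce to Theorem \ref{thm:exists_sofic_with_weakPerron} via the minimal right-resolving cover $\pi : \widetilde Y \to Y$, where $\widetilde Y$ is an \rSFT\ of the same entropy and the same period $q := \per(Y)$, and $\pi$ is finite-to-one. Put $\widetilde X := \pi^{-1}(\phi(X))$, a proper $\sigma$-invariant subshift of $\widetilde Y$ with $h(\widetilde X) = h(X)$. Decompose $\widetilde Y = \widetilde Y_0 \sqcup \cdots \sqcup \widetilde Y_{q-1}$ into its $\sigma^q$-mixing components, cyclically permuted by $\sigma$, and set $\widetilde X_i := \widetilde X \cap \widetilde Y_i$; these are nonempty and cyclically permuted by $\sigma$.

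Viewed as a subshift under $\sigma^q$, $\widetilde Y_0$ is a mixing \SFT\ of entropy $q h(Y)$ while $\widetilde X_0 \subsetneq \widetilde Y_0$ has entropy $q h(X)$. Since $e^h \in \cP^w$ implies $e^{qh} = (e^h)^q \in \cP^w$, we have $qh \in \cP^w$. Apply Theorem \ref{thm:exists_sofic_with_weakPerron} in the $\sigma^q$-category to produce an irreducible sofic (for $\sigma^q$) subshift $\widetilde Z_0$ with $\widetilde X_0 \subset \widetilde Z_0 \subset \widetilde Y_0$ and $\sigma^q$-entropy $qh$. Let $\widetilde Z := \bigcup_{i=0}^{q-1} \sigma^i \widetilde Z_0$: this is $\sigma$-invariant and $\sigma$-irreducible, contains $\widetilde X$, has $\sigma$-entropy $h_{\sigma^q}(\widetilde Z_0)/q = h$, and is sofic under $\sigma$ because its language coincides with $\cL(\widetilde Z_0)$, which is regular by the higher-block conjugacy relating the $\sigma^q$- and $\sigma$-dynamics. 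Finally $Z := \pi(\widetilde Z)$ is an \rSofic\ satisfying $\phi(X) \subset Z \subset Y$ and $h(Z) = h$ by finite-to-oneness of $\pi$, placing $h$ in $\cT'_1(\phi)$.

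For the second statement, the trivial decomposition $\phi = \phi \circ \mathrm{id}_X$ shows that $h(X) \in \cT_1(\phi)$ whenever $X$ is irreducible and sofic, and $h(X) \in \cP^w$ for the same reason as in the first inclusion, so $\cT_1(\phi) = [h(X),h(Y)] \cap \cP^w$. The main technical obstacle is the middle-step bookkeeping of $\sigma$ versus $\sigma^q$: one must verify that the cyclic union $\widetilde Z$ is $\sigma$-irreducible and $\sigma$-sofic even though only the single component $\widetilde Z_0$ was produced as an irreducible sofic shift under $\sigma^q$, and one must invoke the standard fact that the minimal right-resolving cover of an irreducible sofic shift has the same period as the base shift. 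Both are routine but essential for a clean reduction.
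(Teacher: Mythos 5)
Your proposal is correct and follows essentially the same route as the paper, which dismisses this corollary with the one-line remark that it ``follows from the reduction to the finite type and mixing case'': you carry out exactly that reduction (minimal right-resolving cover, passage to a mixing component under $\sigma^q$, application of Theorem \ref{thm:exists_sofic_with_weakPerron}, cyclic union, push-forward by the finite-to-one cover). The only cosmetic remark is that matching the cover's period to $\per(Y)$ is not actually needed, since any period of the cover works for the decomposition and $\cT_1$ imposes no period constraint.
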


\begin{rem}
    In general, $\cT_0(\phi)$ is not even dense under the assumption of Corollary \ref{cor:cT_1_sofic}. For a simple example, let $Y$ be the even shift \cite[\S 1.2]{LM} and $X = \{ 0^\infty \}$. If $Z$ is a \SFT\ which contains $X$ and is contained in $Y$, then $Z = X$. Thus $\cT_0(\phi)$, where $\phi : X \to Y$ is the inclusion map, consists only of one point (compare with Proposition \ref{prop:embedding_DGS}).
\end{rem}

\begin{proof}[Proof of Theorem \ref{thm:embedding_introduction}]
    The result follows from Corollary \ref{cor:cT_sofic}, Corollary \ref{cor:cT_0_SFT}, and Corollary \ref{cor:cT_1_sofic}.
\end{proof}

\vspace{0.1cm}
\begin{ack*}
We thank Sujin Shin for her good advice. We also thank the referee for useful suggestions and comments, especially for pointing out the result stated in Remark \ref{rem:decompose_factor_sofic_case} (2).
The first named author was supported by Basal Grant-CMM and Fondap 15090007. The second named author was supported by TJ Park Postdoctoral Fellowship.
\end{ack*}
\vspace{0.4cm}

\bibliographystyle{amsplain}
\bibliography{_Bib_Decomposition}

\providecommand{\bysame}{\leavevmode\hbox to3em{\hrulefill}\thinspace}
\providecommand{\MR}{\relax\ifhmode\unskip\space\fi MR }
\providecommand{\MRhref}[2]{%
  \href{http://www.ams.org/mathscinet-getitem?mr=#1}{#2}
}
\providecommand{\href}[2]{#2}
\begin{thebibliography}{10}

\bibitem{AdlM}
R.~Adler and B.~Marcus, \emph{\textit{Topological entropy and equivalence of
  dynamical systems}}, {Mem. Amer. Math. Soc. \textbf{219}}, 1979.

\bibitem{Boy83}
M.~Boyle, \emph{\normalfont{Lower entropy factors of sofic systems}}, {\it
  Ergod. Th. \& Dynam. Sys.} \textbf{3} (1983), 541--557.

\bibitem{Boy88}
\bysame, \emph{\normalfont{Eventual extensions of finite codes}}, {\it Proc.
  Amer. Math. Soc.} \textbf{104} (1988), 965--972.

\bibitem{Boy98}
\bysame, \emph{\normalfont{Factoring factor maps}}, {\it J. London Math. Soc.
  (2)} \textbf{57} (1998), 491--502.

\bibitem{BoyKM85}
M.~Boyle, B.~Kitchens, and B.~Marcus, \emph{\normalfont{A note on minimal
  covers for sofic systems}}, {\it Proc. Amer. Math. Soc.} \textbf{95} (1985),
  403--411.

\bibitem{BoyK88}
M.~Boyle and W.~Krieger, \emph{\normalfont{Almost Markov and shift equivalent
  sofic systems}}, \textit{Dynamical systems ({C}ollege {P}ark, {MD},
  1986--87)}, Lecture Notes in Math., Vol. \textbf{1342}, 33--93, Springer,
  Berlin, 1988.

\bibitem{BoyT84}
M.~Boyle and S.~Tuncel, \emph{\normalfont{Infinite-to-one codes and Markov
  measures}}, {\it Trans. Amer. Math. Soc.} \textbf{285} (1984), 657--684.

\bibitem{DGS}
M.~Denker, C.~Grillenberger, and K.~Sigmund, \emph{\textit{Ergodic Theory on
  Compact Spaces}}, Lecture Notes in Math., Vol. \textbf{527}, Springer-Verlag,
  New York, 1976.

\bibitem{Jung09}
U.~Jung, \emph{\normalfont{Open maps between shift spaces}}, {\it Ergod. Th. \&
  Dynam. Sys.} \textbf{29} (2009), 1257--1272.

\bibitem{Jung11}
\bysame, \emph{\normalfont{On the existence of open and bi-continuing codes}},
  {\it Trans. Amer. Math. Soc.} \textbf{363} (2011), 1399--1417.

\bibitem{JungL}
U.~Jung and I.~Lee, \emph{\normalfont{Bi-resolving graph homomorphisms and
  extensions of bi-closing codes}}, to appear in {\it Acta. Appl. Math.}, a
  special issue dedicated to K. H. Kim, arXiv:0904.3042.

\bibitem{KitMT91}
B.~Kitchens, B.~Marcus, and P.~Trow, \emph{\normalfont{Eventual factor maps and
  compositions of closing maps}}, {\it Ergod. Th. \& Dynam. Sys.} \textbf{11}
  (1991), 85--113.

\bibitem{Kri79}
W.~Krieger, \emph{\normalfont{On the periodic points of topological Markov
  chains}}, {\it Math. Z.} \textbf{169} (1979), 99--104.

\bibitem{Kri82}
\bysame, \emph{\normalfont{On the subsystems of topological Markov chains}},
  {\it Ergod. Th. \& Dynam. Sys.} \textbf{2} (1982), 195--202.

\bibitem{LM}
D.~Lind and B.~Marcus, \emph{\textit{An Introduction to Symbolic Dynamics and
  Coding}}, Cambridge University Press, Cambridge, 1995.

\bibitem{LinE95}
E.~Lindenstrauss, \emph{\normalfont{Lowering topological entropy}}, {\it J.
  Anal. Math.} \textbf{67} (1995), 231--267.

\bibitem{Mar85}
B.~Marcus, \emph{\normalfont{Sofic systems and encoding data}}, {\it IEEE
  Trans. Inform. Theory} \textbf{31} (1985), 366--377.

\bibitem{Nas83}
M.~Nasu, \emph{\normalfont{Constant-to-one and onto global maps of
  homomorphisms between strongly connected graphs}}, {\it Ergod. Th. \& Dynam.
  Sys.} \textbf{3} (1983), 387--413.

\bibitem{Tro95}
P.~Trow, \emph{\normalfont{Decompositions of finite-to-one factor maps}}, {\it
  Israel J. Math.} \textbf{91} (1995), 129--155.

\bibitem{Tro98}
\bysame, \emph{\normalfont{Decompositions of factor maps involving bi-closing
  maps}}, {\it Monatsh. Math.} \textbf{125} (1998), 165--172.

\bibitem{Wil74}
R.~F. Williams, \emph{\normalfont{Classification of subshifts of finite type}},
  {\it Ann. Math. (2)} \textbf{98} (1973), 120--153; errata, ibid. (2)
  \textbf{99} (1974), 380--381.

\end{thebibliography}

\end{document}